\title[Cheeger-like inequalities]{Cheeger-like inequalities for the largest eigenvalue of the graph Laplace Operator}
\date{}
\author{J\"urgen Jost}
\address{Max Planck Institute for Mathematics in the Sciences, D-04103 Leipzig, Germany}
\email{jjost@mis.mpg.de}
\author{Raffaella Mulas}
\address{Max Planck Institute for Mathematics in the Sciences, D-04103 Leipzig, Germany}
\email{raffaella.mulas@mis.mpg.de}
\newcommand{\R}{\mathbb{R}}
\DeclareMathOperator{\vol}{vol}
\DeclareMathOperator{\id}{Id}
\theoremstyle{plain}
\newtheorem{theorem}{Theorem}
\newtheorem{lem}[theorem]{Lemma}
\newtheorem{cor}[theorem]{Corollary}
\theoremstyle{definition}
\newtheorem*{definition}{Definition}
\newtheorem{ex}{Example}
\theoremstyle{remark}
\newtheorem{rmk}{Remark}
\tikzset{bluenode/.style={circle,fill=gray!50,minimum size=0.4cm,inner sep=0pt},}
\tikzset{rednode/.style={circle,fill=black!100,minimum size=0.4cm,inner sep=0pt},}
\begin{document}
	\maketitle
	\begin{abstract}We define a new Cheeger-like constant for graphs and we use it for proving Cheeger-like inequalities that bound the largest eigenvalue of the normalized Laplace operator.
		
		\vspace{0.2cm}
		
		\noindent {\bf Keywords:} Spectral theory, Normalized Laplacian, Largest eigenvalue, Cheeger-like constant
	\end{abstract}
	
	\section{Introduction}
	The (normalized) Laplace operator is a very powerful tool for the study of graphs, as its spectrum encodes important information \cite{Chung, JJspectrum, Hypergraphs, Bauer}. Here we consider unweighted and undirected (but oriented) graphs without loops, multiple edges and isolated vertices. For a fixed such graph on $n$ vertices, let's arrange the $n$ eigenvalues of the Laplace operator, counted with multiplicity, as
	\begin{equation*}
	\lambda_1\leq\ldots\leq\lambda_n.
	\end{equation*}
	We have $\lambda_1=0$, and the multiplicity of the eigenvalue $0$ equals the number of the connected components of the graph. Thus, 
	\begin{equation}\label{spec0}
	\lambda_2>0 
	\end{equation}
	if and only if the graph is connected. Henceforth, we shall only consider connected graphs. There is also a quantitative aspect. As we shall explain in more detail below, $\lambda_2$ estimates the coherence of the graph, that is, how different it is from a disconnected one.\\
	The largest eigenvalue, which is the main object of interest of this article,  satisfies
	\begin{equation*}
	\lambda_n\geq \frac{n}{n-1}
	\end{equation*}
	with  equality if and only if the graph is complete. For non-complete gaphs
	\begin{equation*}
	\lambda_n\geq  \frac{n+1}{n-1},
	\end{equation*}
	with  equality if and only if the graph either is obtained from a complete graph by removing a single edge or consists of two complete graphs of size $\frac{n+1}{2}$ that share a single vertex \cite{jmm}. In the other direction
	\begin{equation} \label{spec3}
	\lambda_n\leq 2   
	\end{equation}with equality if and only if at least one connected component of the graph is bipartite. \\
	For connected graphs, the first non--zero eigenvalue $\lambda_2$ is controlled both above and below by the Cheeger constant $h$, a quantity that measures how difficult it is to partition the vertex set into two disjoint sets $V_1$ and $V_2$ such that the number of edges between $V_1$ and $V_2$ is as small as possible and such that the \emph{volume} of both $V_1$ and $V_2$, i.e. the sum of the degrees of their vertices, is as big as possible. In particular,
	\begin{equation}\label{chee}
	\frac{1}{2}h^2\leq \lambda_2\leq 2h.
	\end{equation}Furthermore, there is an interesting characterization of $h$ obtained by writing $\lambda_2$ using the \emph{Rayleigh quotient} and then replacing the $L_2$--norm by the $L_1$--norm both in the numerator and denominator, as we shall see in Section \ref{sectionprel}.\newline
	
	In this paper, we want to explore an analogue of this for $\lambda_n$. In the same sense that by \eqref{chee}, $\lambda_2$ estimates how different a connected graph is from being disconnected, by \eqref{spec3}, $2-\lambda_n$ should quantify how different the graph is from being bipartite. One might therefore try to find the best (in a suitable sense) bipartite subgraph of our graph, because for a bipartite graph, the Rayleigh quotient that we shall discuss below is $2$, the maximal possible value. In fact, as it turns out, that subgraph can be quite small. More precisely, we shall introduce a new constant that is an analogue of the Cheeger constant in the sense 	    that it can be characterized by writing $\lambda_n$ using the \emph{Rayleigh quotient} and then replacing the $L_2$--norm by the $L_1$--norm both in the numerator and denominator. This constant is very simple,  
	\begin{equation*}
	Q:=\max_{\text{edges }v\sim w}\biggl(\frac{1}{\deg v}+\frac{1}{\deg w}\biggr).
	\end{equation*}
	Analogously to the Cheeger estimate \eqref{chee}, we shall  prove that it controls the largest eigenvalue $\lambda_n$ both above and below. Therefore, $Q$ is an analogue of the Cheeger constant for the largest eigenvalue. \\
	As we had explained above, $\lambda_2$ controls how different the graph is from a connected. Analogously, in view of \eqref{spec3}, one should expect that $2-\lambda_n$ measures the difference from a bipartite graph. 
	\newline
	
	Throughout the paper we shall also prove new general results of spectral graph theory that are useful in order to prove or discuss our main result.
	
	\subsection*{Structure of the paper}In Section \ref{sectionprel} we discuss the Laplace operator, the Cheeger constant, the dual Cheeger constant and the edge--Laplacian, as preliminaries to our work. In Section \ref{sectionmain}, and in particular in Theorem \ref{maintheo}, we present our main results and we prove them in Section \ref{sectionproof}. In Section \ref{sectionchoiceQ} we motivate the choice of $Q$, in Section \ref{sectionprecisionlower} we discuss the precision of our lower bound for $\lambda_n$ and finally in Section \ref{sectionprecisionupper} we discuss the precision of our upper bound. 
	
	\section{Preliminaries}\label{sectionprel}
	In this section we present some well--known results of spectral graph theory as preliminaries to our work; a general reference is \cite{Chung}.\newline
	
	From here on we fix a graph $\Gamma=(V,E)$ on $n$ vertices. We also fix an arbitrary orientation on $\Gamma$, that is, we see each edge as an arbitrarily ordered pair of its endpoints. Given $e=(v,w)\in E$, we say that $v$ is the \emph{input} of $e$ and $w$ is its \emph{output}. The fixed orientation is needed in order to do the computations when considering a function $\gamma: E\rightarrow \mathbb{R}$. However, the results are independent of the chosen orientation because, if one reverses the orientation of some edges, changing the sign of $\gamma$ on these edges leads to the same results. Therefore, the \emph{oriented edges} considered here should not be confused with \emph{directed edges}. Moreover, we shall use the notation $v\sim w$ for indicating (oriented) edges when input and output don't need to be distinguished.
	
	\subsection{Laplace operator and its eigenvalues} Let $\id$ be the $n\times n$ identity matrix, let $A$ be the \emph{adjacency  matrix} of $\Gamma$, let $D$ be the diagonal \emph{degree matrix} and let
	\begin{equation*}
	L:=\id-D^{-1}A
	\end{equation*}be the \emph{(normalized) Laplace operator}. 
	\begin{rmk}The Laplace operator considered in \cite{Chung} is $\mathcal{L}:=\id-D^{-1/2}AD^{-1/2}$. Since one can check that $L=\id-D^{-1/2}(\id-\mathcal{L})D^{1/2}$, the matrices $L$ and $\mathcal{L}$ are similar, therefore they have the same spectrum, including multiplicities, although the eigenfunctions can be different. 
	\end{rmk}

	By the \emph{Courant-Fischer-Weyl min-max principle}, we can write the eigenvalues 
	\begin{equation*}
	\lambda_1\leq\ldots\leq\lambda_n
	\end{equation*}
	of $L$ in terms of the \emph{Rayleigh quotient} \cite[pages 4 and 5]{Chung}. In particular,
	\begin{align*}
	\lambda_2&=\min_{f:V\rightarrow\mathbb{R}\text{ s.t. }f\neq 0,\,\sum_{v\in V}\deg v\cdot f(v)=0}\frac{\sum_{v\sim w}\biggl(f(v)-f(w)\biggr)^2}{\sum_{v\in V}\deg v\cdot f(v)^2}\\
	&=\min_{f:V\rightarrow\mathbb{R}\text{ non constant}}\max_{t\in\R}\frac{\sum_{v\sim w}\biggl(f(v)-f(w)\biggr)^2}{\sum_{v\in V}\deg v\cdot \bigl(f(v)-t\bigr)^2}
	\end{align*}and
	\begin{equation*}
	\lambda_n=\max_{f:V\rightarrow\mathbb{R},\,f\neq 0}\frac{\sum_{v\sim w}\biggl(f(v)-f(w)\biggr)^2}{\sum_{v\in V}\deg v\cdot f(v)^2}.
	\end{equation*}
	\begin{rmk}
		The condition $\sum_{v\in V}\deg v\cdot f(v)=0$ above is the \emph{orthogonality to the constants}. It comes from the fact that the constant functions are always eigenfunctions for $\lambda_1=0$ and, by the min-max principle, the eigenfunctions of the other eigenvalues must be orthogonal to them with respect to the scalar product $(f,g):=\sum_v\deg v\cdot f(v)\cdot g(v)$. The orthogonality to the constants is satisfied also by the eigenfunctions of $\lambda_n$, but in this case we don't need to specify it.
	\end{rmk}
	
	\subsection{Cheeger constant}For a connected graph $\Gamma=(V,E)$, the \emph{Cheeger constant} is defined as
	\begin{equation*}
	h:=\min_S\frac{|E(S,\bar{S})|}{\min\{\vol(S),\vol(\bar{S})\}}
	\end{equation*}where, given $\emptyset\neq S\subsetneq V$, $\bar{S}:=V\setminus S$, $|E(S,\bar{S})|$ denotes the number of edges with one endpoint in $S$ and the other in $\bar{S}$, and $\vol(S):=\sum_{v\in S}\deg(v)$.\newline
	
	The following  theorem \cite{dodziuk,alon} gives two important bounds for $\lambda_2$ in terms of $h$.
	\begin{theorem}\label{theoCheeger}
		For every connected graph,
		\begin{equation}\label{chee2}
		1-\sqrt{1-h^2}\leq \lambda_2\leq 2h.
		\end{equation}
	\end{theorem}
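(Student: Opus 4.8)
The plan is to prove the two inequalities by genuinely different means: the upper bound $\lambda_2\le 2h$ comes from a single well-chosen test function, while the lower bound $1-\sqrt{1-h^2}\le\lambda_2$ is the substantial direction, obtained by extracting a good vertex cut from an eigenfunction of $\lambda_2$ together with a co-area argument. Throughout I write $R(g):=\frac{\sum_{v\sim w}(g(v)-g(w))^2}{\sum_{v\in V}\deg v\cdot g(v)^2}$ for the Rayleigh quotient of a function $g\neq 0$, and I use (implicitly) that $h\le 1$, so that $\sqrt{1-h^2}$ is real.

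For the upper bound I would use the first variational expression for $\lambda_2$, the one carrying the orthogonality constraint $\sum_{v\in V}\deg v\cdot f(v)=0$. Let $S$ realize the Cheeger constant, with $\vol(S)\le\vol(\bar S)$, and test with the two-valued function $f$ equal to $\vol(\bar S)$ on $S$ and to $-\vol(S)$ on $\bar S$; this $f$ is orthogonal to the constants. A direct computation gives numerator $|E(S,\bar S)|\cdot\vol(V)^2$ (only crossing edges contribute, each contributing $(\vol(\bar S)+\vol(S))^2$) and denominator $\vol(S)\vol(\bar S)\vol(V)$, so that $R(f)=|E(S,\bar S)|\bigl(\tfrac{1}{\vol(S)}+\tfrac{1}{\vol(\bar S)}\bigr)$. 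Using $\tfrac{1}{\vol(\bar S)}\le\tfrac{1}{\vol(S)}$ and $\min\{\vol(S),\vol(\bar S)\}=\vol(S)$ bounds this by $2h$, whence $\lambda_2\le R(f)\le 2h$.

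For the lower bound, let $f$ be an eigenfunction of $\lambda_2$, so $\sum_{v\in V}\deg v\cdot f(v)=0$, and split it into $f_+=\max(f,0)$ and $f_-=\max(-f,0)$. Since $\vol(\{f>0\})+\vol(\{f<0\})\le\vol(V)$, at least one of these parts, say $g$, is supported on a set of volume $\le\vol(V)/2$. Two properties of $g$ drive the argument. First, $R(g)\le\lambda_2$: this follows by multiplying the eigenvalue equation $(Lf)(v)=\lambda_2 f(v)$ by $\deg v\cdot g(v)$, summing over the support of $g$, and using that $g(v)=f(v)>0$ there while $f(w)\le g(w)$ at every vertex, which turns the identity into $\sum_{v\sim w}(g(v)-g(w))^2\le\lambda_2\sum_{v\in V}\deg v\cdot g(v)^2$. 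Second, a co-area inequality: expanding $g(v)^2=\int_0^\infty\mathbf{1}[g(v)^2>t]\,dt$ and summing the resulting layer-cake expression over superlevel sets $B_t=\{v:g(v)^2>t\}$ gives $\sum_{v\sim w}|g(v)^2-g(w)^2|=\int_0^\infty|E(B_t,\bar B_t)|\,dt$; since each $B_t$ lies in the support of $g$ and hence has volume $\le\vol(V)/2$, the definition of $h$ yields $|E(B_t,\bar B_t)|\ge h\cdot\vol(B_t)$, and integrating back gives $\sum_{v\sim w}|g(v)^2-g(w)^2|\ge h\sum_{v\in V}\deg v\cdot g(v)^2$. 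Factoring $|g(v)^2-g(w)^2|=|g(v)-g(w)|\cdot|g(v)+g(w)|$, applying Cauchy–Schwarz, and using the exact identity $\sum_{v\sim w}(g(v)+g(w))^2=(2-R(g))\sum_{v\in V}\deg v\cdot g(v)^2$ then produces $h\le\sqrt{R(g)\bigl(2-R(g)\bigr)}$, that is $(1-R(g))^2\le 1-h^2$, hence $R(g)\ge 1-\sqrt{1-h^2}$. Combined with $R(g)\le\lambda_2$ this gives $\lambda_2\ge 1-\sqrt{1-h^2}$.

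The main obstacle is obtaining the sharp exponent rather than a weaker bound. The crude estimate $\sum_{v\sim w}(g(v)+g(w))^2\le 2\sum_{v\in V}\deg v\cdot g(v)^2$ is immediate but only yields $\tfrac12 h^2\le\lambda_2$, the estimate recorded in \eqref{chee}; recovering the stated $1-\sqrt{1-h^2}$ forces one to keep the exact identity $\sum_{v\sim w}(g(v)+g(w))^2=(2-R(g))\sum_{v\in V}\deg v\cdot g(v)^2$ so that the factor $2-R(g)$ survives the Cauchy–Schwarz step. The second delicate point is the bound $R(g)\le\lambda_2$ for the positive part of the eigenfunction, where the sign information $f(w)\le g(w)$ and the eigenvalue equation must be combined carefully, together with checking that the volume-halving choice between $f_+$ and $f_-$ is compatible with this inequality.
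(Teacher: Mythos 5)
Your proof is correct, and it reconstructs the classical argument: the paper itself does not prove Theorem \ref{theoCheeger} but cites it from the literature (Dodziuk, Alon--Milman; see also Chung's book), and your two steps --- the two-valued test function $f=\vol(\bar S)$ on $S$, $-\vol(S)$ on $\bar S$ for the upper bound, and the restriction of the eigenfunction to its smaller-volume sign set combined with the co-area formula, Cauchy--Schwarz, and the identity $\sum_{v\sim w}(g(v)+g(w))^2=(2-R(g))\sum_v\deg v\cdot g(v)^2$ for the sharp lower bound --- are exactly the standard proof given in those references. In particular you correctly identified the one refinement that separates $1-\sqrt{1-h^2}$ from the weaker $\tfrac12 h^2$, namely keeping the factor $2-R(g)$ rather than bounding it crudely by $2$.
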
Also, the following theorem \cite[Theorem 2.8 and Corollary 2.9]{Chung} shows the interesting relation between $h$ and $\lambda_2$ when, in the characterizations of $\lambda_2$ via the Rayleigh quotient, we replace the $L_2$--norm by the $L_1$--norm both in the numerator and denominator.
	\begin{theorem}\label{theocharCheeger}For every connected graph,
		\begin{equation*}
		h=\min_{f:V\rightarrow\mathbb{R}\text{ non constant}}\max_{t\in\R}\frac{\sum_{v\sim w}\bigl|f(v)-f(w)\bigr|}{\sum_{v\in V}\deg v\cdot \bigl|f(v)-t\bigr|}
		\end{equation*}and
		\begin{equation*}
		\frac{1}{2}h\leq\min_{f:V\rightarrow\mathbb{R}\text{ s.t. }f\neq 0,\,\sum_{v\in V}\deg v\cdot f(v)=0}\frac{\sum_{v\sim w}\bigl|f(v)-f(w)\bigr|}{\sum_{v\in V}\deg v\cdot \bigl|f(v)\bigr|}\leq h.
		\end{equation*}
	\end{theorem}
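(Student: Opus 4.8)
The plan is to prove both assertions with the discrete \emph{coarea formula} (a layer-cake decomposition over the level sets of $f$), which is the standard device for passing between $L_1$ Rayleigh quotients and the combinatorial Cheeger constant. Two elementary identities drive everything. First, writing $S_t:=\{v\in V:f(v)>t\}$, for every $f:V\to\R$ one has the coarea identity $\sum_{v\sim w}|f(v)-f(w)|=\int_{-\infty}^{\infty}|E(S_t,\bar S_t)|\,dt$, obtained by noting that an edge $v\sim w$ lies in $E(S_t,\bar S_t)$ precisely for $t$ between $f(v)$ and $f(w)$. Second, the layer-cake identity $\sum_{v\in V}\deg v\cdot|f(v)|=\int_{0}^{\infty}\vol(\{v:|f(v)|>r\})\,dr$, together with the classical fact that $\min_{t\in\R}\sum_v\deg v\,|f(v)-t|$ is attained at any weighted median $t_0$ of $f$, i.e. any $t_0$ with $\vol(\{f>t_0\})\le\tfrac12\vol(V)$ and $\vol(\{f<t_0\})\le\tfrac12\vol(V)$.

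For the first equality I would prove the two inequalities separately. For the bound ``$\ge h$'', fix a non-constant $f$ and, after replacing $f$ by $f-t_0$, assume the median is $0$, so that the inner $\max_t$ is realized at $t=0$ and the denominator equals $\sum_v\deg v\,|f(v)|$. Splitting $f=f_+-f_-$ and applying the two identities on $(0,\infty)$ and $(-\infty,0)$ separately, every level set $S_t$ with $t>0$ satisfies $\vol(S_t)\le\tfrac12\vol(V)$ (by the median condition), hence $|E(S_t,\bar S_t)|\ge h\,\vol(S_t)=h\,\min\{\vol(S_t),\vol(\bar S_t)\}$ by the definition of $h$; integrating, and doing the symmetric estimate for the negative levels, yields $\sum_{v\sim w}|f(v)-f(w)|\ge h\sum_v\deg v\,|f(v)|$, which is the claimed bound. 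For the reverse inequality I would exhibit a test function: take a set $S$ attaining the Cheeger minimum with $\vol(S)\le\vol(\bar S)$ and set $f=\mathbf 1_S$; then the numerator is exactly $|E(S,\bar S)|$, the weighted median is $0$, and the optimal denominator is $\vol(S)=\min\{\vol(S),\vol(\bar S)\}$, so the ratio equals $h$.

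For the two-sided estimate, write $h'$ for the middle quantity. The bound $h'\le h$ follows by feeding in the centered indicator $f=\mathbf 1_S-c$ with $c=\vol(S)/\vol(V)$ (so that $f$ is orthogonal to the constants) for the optimal Cheeger set $S$: the numerator is still $|E(S,\bar S)|$, a short computation gives denominator $2\,\vol(S)\vol(\bar S)/\vol(V)$, and the resulting ratio is $h\cdot\vol(V)/(2\max\{\vol(S),\vol(\bar S)\})\le h$. For $h'\ge\tfrac12 h$, take any $f$ orthogonal to the constants (so its weighted \emph{mean} is $0$) and let $t_0$ be its weighted median. The already-proved equality gives $\sum_{v\sim w}|f(v)-f(w)|\ge h\sum_v\deg v\,|f(v)-t_0|$, so it suffices to show $\sum_v\deg v\,|f(v)-t_0|\ge\tfrac12\sum_v\deg v\,|f(v)|$. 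By the triangle inequality $\sum_v\deg v\,|f(v)|\le\sum_v\deg v\,|f(v)-t_0|+|t_0|\vol(V)$, while the mean-zero condition gives $|t_0|\vol(V)=\bigl|\sum_v\deg v\,(f(v)-t_0)\bigr|\le\sum_v\deg v\,|f(v)-t_0|$; combining the two inequalities produces exactly the factor $\tfrac12$.

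The main obstacle is not any single computation but getting the bookkeeping of the coarea argument exactly right: one must ensure that the median normalization guarantees $\min\{\vol(S_t),\vol(\bar S_t)\}=\vol(S_t)$ for every positive level $t$ (and the symmetric statement for negative levels), since it is precisely this that lets the global Cheeger inequality be applied level-by-level inside the integral. The factor-$\tfrac12$ loss in the second statement is genuine, and traces back to the gap between centering by the mean (forced by orthogonality to the constants) and centering by the median (which is what minimizes the $L_1$ denominator); the triangle-inequality argument above is the cleanest way I know to control that gap.
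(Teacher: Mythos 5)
Your proof is correct. Note, however, that the paper itself gives no proof of this statement: it is quoted verbatim from the cited reference (Chung's monograph, Theorem 2.8 and Corollary 2.9), so there is no in-paper argument to compare against. Your argument --- the coarea identity $\sum_{v\sim w}|f(v)-f(w)|=\int|E(S_t,\bar S_t)|\,dt$ combined with median-centering so that $\min\{\vol(S_t),\vol(\bar S_t)\}=\vol(S_t)$ holds level-by-level, the indicator $\mathbf{1}_S$ of an optimal Cheeger set as test function for the upper bound, the centered indicator $\mathbf{1}_S-\vol(S)/\vol(V)$ for $h'\leq h$, and the triangle-inequality comparison $\sum_v\deg v\,|f(v)-t_0|\geq\tfrac12\sum_v\deg v\,|f(v)|$ between the mean-zero normalization and the median --- is essentially the standard proof appearing in that cited source, with all the delicate points (why the median makes the level sets the smaller side of the cut, and why the factor $\tfrac12$ arises from the mean/median discrepancy) handled correctly.
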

	\begin{rmk}Interestingly, the quantity
		\begin{equation*}
		\min_{f:V\rightarrow\mathbb{R}\text{ non constant}}\max_{t\in\R}\frac{\sum_{v\sim w}\bigl|f(v)-f(w)\bigr|}{\sum_{v\in V}\deg v\cdot \bigl|f(v)-t\bigr|}
		\end{equation*}that characterizes $h$ in Theorem \ref{theocharCheeger} is equal to the second smallest eigenvalue of the \emph{$1$--Laplacian} \cite{Chang2,Hein2,Hein,Chang,chang2016a}.
	\end{rmk}
	Our Theorem \ref{maintheo} below is an analogue of Theorem \ref{theoCheeger} and Theorem \ref{theocharCheeger} for the largest eigenvalue $\lambda_n$ in terms of our new constant $Q$. Before stating it, we shall discuss the dual Cheeger constant and the edge--Laplacian.
	\subsection{Dual Cheeger constant}In literature there is already a Cheeger-like constant that bounds the largest eigenvalue \cite{dual2, dual}. It is defined as
	\begin{equation*}
	\bar{h}:=\max_{\text{partitions } V=V_1\sqcup V_2 \sqcup V_3}\frac{|E(V_1,V_2)|}{\vol(V_1)+\vol(V_2)},
	\end{equation*}it is called the \emph{dual Cheeger constant} and it satisfies an analogue of \eqref{chee2}, 
	\begin{equation*}
	2\bar{h}\leq\lambda_n\leq 1+\sqrt{1-(1-\bar{h})^2}.
	\end{equation*}The two constants  $h$ and $\bar{h}$ are actually related to each other \cite{dual2}. For the dual Cheeger constant, however, there is no result analogous to Theorem \ref{theocharCheeger} \cite{chang2016b}. This  motivates the definition of  the new constant $Q$ that again bounds $\lambda_n$ and, additionally, satisfies an analogue of Theorem \ref{theocharCheeger}.
	\subsection{Edge--Laplacian}Associated to the Laplace operator there is also the \emph{edge--Laplacian}, defined as
	\begin{equation*}
	L^E:=\mathcal{I}^T D^{-1} \mathcal{I},
	\end{equation*}where $\mathcal{I}$ is the $|V|\times|E|$ \emph{incidence  matrix} of $\Gamma$.  Instead on acting on functions defined on the vertex sets, $L^E$ acts on functions defined on the edge set. It has the same non--zero spectrum of $L$ (i.e. the non--zero eigenvalues are the same, counted with multiplicity) and the multiplicity of the eigenvalue $0$ for $L^E$ equals the number of cycles of $\Gamma$ \cite{Hypergraphs}. We can therefore write the largest eigenvalue (that coincides for $L$ and $L^E$) also in terms of the Rayleigh quotient for functions on the edge set, by applying the min--max principle to $L^E$:
	\begin{align*}
	\lambda_n&=\max_{f:V\rightarrow\mathbb{R},\,f\neq 0}\frac{\sum_{v\sim w}\biggl(f(v)-f(w)\biggr)^2}{\sum_{v\in V}\deg v\cdot f(v)^2}\\
	&=\max_{\gamma:E\rightarrow\mathbb{R},\,\gamma\neq 0}\frac{\sum_{v\in V}\frac{1}{\deg v}\cdot \biggl(\sum_{e_{\text{in}}: v\text{ input}}\gamma(e_{\text{in}})-\sum_{e_{\text{out}}: v\text{ output}}\gamma(e_{\text{out}})\biggr)^2}{\sum_{e\in E}\gamma(e)^2}.
	\end{align*}In Section \ref{sectionmain} we shall present an analogue of Theorem \ref{theoCheeger} and Theorem \ref{theocharCheeger}, where:
	\begin{itemize}
		\item We look at $\lambda_n$ instead of $\lambda_2$;
		\item We use $Q$ instead of $h$;
		\item We use the point of view of the edge--Laplacian for considering the Rayleigh quotient and characterize $Q$.
	\end{itemize}
	\section{Main results}\label{sectionmain}
	Before stating our main theorem, let's recall that for a graph $\Gamma$ we have defined the new Cheeger--like constant 
	\begin{equation*}
	Q:=\max_{v\sim w}\biggl(\frac{1}{\deg v}+\frac{1}{\deg w}\biggr).
	\end{equation*}Let's also define the constant
	\begin{equation*}
	\tau:= \max_{v\sim w:\,\deg w\geq \deg v}\Biggl(\frac{(\deg w-\deg v+n)\cdot \deg v}{\deg v+\deg w}\Biggr).
	\end{equation*}

	\begin{theorem}\label{maintheo}
		For every graph,
		\begin{equation*}
		Q=\max_{\gamma:E\rightarrow\mathbb{R},\gamma\neq 0}\frac{\sum_{v\in V}\frac{1}{\deg v}\cdot \biggl|\sum_{e_{\text{in}}: v\text{ input}}\gamma(e_{\text{in}})-\sum_{e_{\text{out}}: v\text{ output}}\gamma(e_{\text{out}})\biggr|}{\sum_{e\in E}|\gamma(e)|}
		\end{equation*}and
		\begin{equation*}
		Q\leq \lambda_n\leq Q\cdot\tau.
		\end{equation*}
	\end{theorem}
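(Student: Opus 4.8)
The plan is to treat the three assertions separately, following the template of Theorem \ref{theocharCheeger} and Theorem \ref{theoCheeger} but transported into the edge--Laplacian picture. Throughout I abbreviate
\begin{equation*}
\delta_v(\gamma):=\sum_{e_{\text{in}}:\,v\text{ input}}\gamma(e_{\text{in}})-\sum_{e_{\text{out}}:\,v\text{ output}}\gamma(e_{\text{out}}),
\end{equation*}
so that the two Rayleigh quotients to be compared are the $L_1$ version $\sum_v\tfrac{1}{\deg v}|\delta_v(\gamma)|\big/\sum_e|\gamma(e)|$, which should equal $Q$, and the $L_2$ version $\sum_v\tfrac{1}{\deg v}\delta_v(\gamma)^2\big/\sum_e\gamma(e)^2$, which equals $\lambda_n$ by the min--max principle applied to $L^E=\mathcal I^TD^{-1}\mathcal I$. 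For the identity $Q=\max_\gamma(\cdots)$ I would prove two inequalities. For ``$\le$'', fix $\gamma\neq0$; the triangle inequality gives $|\delta_v(\gamma)|\le\sum_{e\ni v}|\gamma(e)|$, so that
\begin{equation*}
\sum_{v}\frac{1}{\deg v}\,|\delta_v(\gamma)|\le\sum_v\frac{1}{\deg v}\sum_{e\ni v}|\gamma(e)|=\sum_{v\sim w}|\gamma(v\sim w)|\Bigl(\frac{1}{\deg v}+\frac{1}{\deg w}\Bigr)\le Q\sum_e|\gamma(e)|,
\end{equation*}
where the middle equality exchanges the order of summation and the last step bounds each per--edge factor by $Q$. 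For ``$\ge$'' I would test with the indicator $\gamma=\mathbf 1_{e_0}$ of a single edge $e_0=(v_0,w_0)$ realizing the maximum defining $Q$: then $\delta_v(\gamma)$ is nonzero only at $v_0,w_0$, where it is $\pm1$, so the quotient equals exactly $\tfrac{1}{\deg v_0}+\tfrac{1}{\deg w_0}=Q$.

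The lower bound $Q\le\lambda_n$ is the analogue of the easy Cheeger direction $\lambda_2\le2h$ and is proved by the same single--edge test function: inserting $\gamma=\mathbf 1_{e_0}$ into the $L_2$ edge--Laplacian quotient yields numerator $\tfrac{1}{\deg v_0}+\tfrac{1}{\deg w_0}=Q$ and denominator $1$, whence $\lambda_n\ge Q$.

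The upper bound $\lambda_n\le Q\cdot\tau$ is the genuine analogue of the hard Cheeger direction $\tfrac12h^2\le\lambda_2$, and I expect it to be the main obstacle. The natural strategy is a co--area/Cauchy--Schwarz argument that converts the $L_2$ quotient back into the $L_1$ one: starting from an eigenfunction $\gamma$ of $L^E$ for $\lambda_n$, apply the characterization of $Q$ just established to the companion function $\tilde\gamma(e):=\gamma(e)\,|\gamma(e)|$, for which $\sum_e|\tilde\gamma(e)|=\sum_e\gamma(e)^2$, obtaining $\sum_v\tfrac{1}{\deg v}|\delta_v(\tilde\gamma)|\le Q\sum_e\gamma(e)^2$. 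It then remains to compare this left--hand side with $\sum_v\tfrac{1}{\deg v}\delta_v(\gamma)^2=\lambda_n\sum_e\gamma(e)^2$, and this is precisely where $\tau$ should enter, through an aggregate inequality of the shape $\sum_v\tfrac{1}{\deg v}\delta_v(\gamma)^2\le\tau\sum_v\tfrac{1}{\deg v}|\delta_v(\tilde\gamma)|$. A convenient reformulation of the target is that, for a single edge $e=(v,w)$ with $\deg w\ge\deg v$,
\begin{equation*}
\Bigl(\frac{1}{\deg v}+\frac{1}{\deg w}\Bigr)\cdot\frac{(\deg w-\deg v+n)\deg v}{\deg v+\deg w}=\frac{n+\deg w-\deg v}{\deg w},
\end{equation*}
so that $Q\cdot\tau\ge\max_{v\sim w:\,\deg w\ge\deg v}\tfrac{n+\deg w-\deg v}{\deg w}$; it would therefore suffice to locate, from the eigenfunction, a single edge on which $\lambda_n\le\tfrac{n+\deg w-\deg v}{\deg w}$.

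The difficulty, and the reason a crude estimate will not suffice, is that passing from $\delta_v(\gamma)^2$ to $|\delta_v(\tilde\gamma)|$ discards the sign cancellations inside $\delta_v$: a naive bound by absolute values, equivalently a Gershgorin estimate on $L^E$, only yields the trivial $\lambda_n\le2$, and this is far from sharp, since for the complete graph $K_n$ one computes $Q\cdot\tau=\tfrac{n}{n-1}=\lambda_n$, so the bound is tight and genuinely below $2$. Thus $\tau$ must be extracted from a more careful, and necessarily global, estimate: expanding $\delta_v(\gamma)^2$ per vertex as $\bigl(\sum_{e\ni v}\pm\gamma(e)\bigr)^2$ and applying Cauchy--Schwarz across the at most $n-1$ edges incident to $v$, while tracking the sign pattern and the way the endpoint degrees $\deg v\le\deg w$ combine, should reproduce the quantity $\tfrac{(\deg w-\deg v+n)\deg v}{\deg v+\deg w}$ defining $\tau$. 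The presence of $n$ signals that the vertex count (via $\deg\le n-1$, together with the orthogonality $\sum_v\deg v\,f(v)=0$) is used essentially, so I expect the decisive step to be a vertex--by--vertex Cauchy--Schwarz in which the number of admissible neighbours is controlled by $n$, rather than a purely local two--vertex extremal argument; a quick check indeed shows that such a local argument stalls at $\lambda_n\le2$ and never produces the factor $n$.
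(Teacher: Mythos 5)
Your handling of the first two assertions is correct and coincides with the paper's own argument: the characterization of $Q$ is proved there (Lemma \ref{maintheo1}) by exactly your two steps --- the triangle inequality plus exchange of summation for the upper direction, and the single-edge indicator $\gamma=\mathbf{1}_{e_0}$ on a maximizing edge for the lower direction --- and the bound $Q\leq\lambda_n$ (Lemma \ref{maintheo2}) is likewise obtained by plugging that same indicator into the $L_2$ edge--Laplacian Rayleigh quotient.

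The third assertion, $\lambda_n\leq Q\cdot\tau$, is however not proved in your proposal; this is a genuine gap. Your algebraic reduction is sound: since
\begin{equation*}
\Bigl(\frac{1}{\deg v}+\frac{1}{\deg w}\Bigr)\cdot\frac{(\deg w-\deg v+n)\deg v}{\deg v+\deg w}=\frac{n+\deg w-\deg v}{\deg w},
\end{equation*}
it indeed suffices to show $\lambda_n\leq\max_{v\sim w:\,\deg w\geq\deg v}\frac{n+\deg w-\deg v}{\deg w}$. But at this point you only conjecture that a vertex-by-vertex Cauchy--Schwarz applied to an eigenfunction of $L^E$, via the companion function $\tilde\gamma=\gamma|\gamma|$, ``should'' yield this; no such estimate is carried out, and you yourself observe that the naive version of it stalls at the trivial bound $\lambda_n\leq 2$. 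The paper closes precisely this gap by invoking a known result, \cite[Theorem 5]{upper}, namely
\begin{equation*}
\lambda_n\leq 2-\min_{v\sim w}\frac{\bigl|\mathcal{N}(v)\cap\mathcal{N}(w)\bigr|}{\max\{\deg v,\deg w\}},
\end{equation*}
and combining it with the inclusion--exclusion (pigeonhole) bound $\bigl|\mathcal{N}(v)\cap\mathcal{N}(w)\bigr|\geq\deg v+\deg w-n$, valid because $v$ and $w$ have $\deg v+\deg w$ neighbours among only $n$ vertices; this gives $\lambda_n\leq\max_{v\sim w:\,\deg w\geq\deg v}\frac{\deg w-\deg v+n}{\deg w}$, and your identity then finishes the proof. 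Note also that your closing heuristic points in the wrong direction: the factor $n$ does not enter through a global argument or the orthogonality to constants, but through the purely local common-neighbourhood count above; what is genuinely nontrivial is the cited spectral bound itself, which your sketch neither reproduces nor replaces.
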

	Observe that the characterization of $Q$ appearing in Theorem \ref{maintheo} equals the Rayleigh quotient we have used for writing $\lambda_n$ from the point of view of the edge--Laplacian, replacing the $L_2$--norm by the $L_1$--norm. Therefore, such a characterization is analogous to the one of $h$ in Theorem \ref{theocharCheeger}. We prove Theorem \ref{maintheo} in Section \ref{sectionproof}. Also, in Section \ref{sectionchoiceQ} we motivate the choice of $Q$, in Section \ref{sectionprecisionlower} we discuss whether  the lower bound appearing in Theorem \ref{maintheo} is sharp, and in Section \ref{sectionprecisionupper} we discuss the sharpness of the upper bound. 
	\section{Proof of the main results}\label{sectionproof}
	We split the statement of Theorem \ref{maintheo} into three parts. The first part, Lemma \ref{maintheo1}, contains the characterization of $Q$. The second part, Lemma \ref{maintheo2}, states that $Q\leq \lambda_n$. The third part, Lemma \ref{maintheo3}, states that $\lambda_n\leq Q\cdot \tau$.
	\subsection{Characterization of Q}
	\begin{lem}\label{maintheo1}For every graph,
		\begin{equation*}
		Q=\max_{\gamma:E\rightarrow\mathbb{R},\,\gamma\neq 0}\frac{\sum_{v\in V}\frac{1}{\deg v}\cdot \biggl|\sum_{e_{\text{in}}: v\text{ input}}\gamma(e_{\text{in}})-\sum_{e_{\text{out}}: v\text{ output}}\gamma(e_{\text{out}})\biggr|}{\sum_{e\in E}|\gamma(e)|}.
		\end{equation*}
	\end{lem}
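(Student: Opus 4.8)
The plan is to denote the quotient on the right-hand side by $R(\gamma)$, and to prove the equality $\max_\gamma R(\gamma)=Q$ by establishing the two inequalities $\max_\gamma R(\gamma)\ge Q$ and $\max_\gamma R(\gamma)\le Q$ separately. Since both numerator and denominator are positively homogeneous of degree $1$ in $\gamma$, the quotient is invariant under scaling $\gamma\mapsto c\gamma$, so no normalization is needed; and once I exhibit one $\gamma$ with $R(\gamma)=Q$ together with the uniform bound $R(\gamma)\le Q$ for all $\gamma$, it follows that the maximum is exactly $Q$ and is attained.

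For the lower bound I would use a test function concentrated on a single edge. Let $e_0=(v_0,w_0)$ be an edge realizing the maximum in the definition of $Q$, so that $Q=\frac{1}{\deg v_0}+\frac{1}{\deg w_0}$, and set $\gamma(e_0)=1$ and $\gamma(e)=0$ for every other edge $e$. Then the denominator $\sum_{e\in E}|\gamma(e)|$ equals $1$. In the numerator, the only vertices with a nonzero summand are $v_0$, which is the input of $e_0$ and contributes $\frac{1}{\deg v_0}\cdot|1-0|$, and $w_0$, which is the output of $e_0$ and contributes $\frac{1}{\deg w_0}\cdot|0-1|$; every other vertex is incident to no edge on which $\gamma$ is nonzero. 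Hence the numerator equals $\frac{1}{\deg v_0}+\frac{1}{\deg w_0}=Q$, giving $R(\gamma)=Q$ and therefore $\max_\gamma R(\gamma)\ge Q$.

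For the upper bound I would estimate the numerator for an arbitrary $\gamma$ by applying the triangle inequality at each vertex,
\[
\left|\sum_{e_{\text{in}}: v\text{ input}}\gamma(e_{\text{in}})-\sum_{e_{\text{out}}: v\text{ output}}\gamma(e_{\text{out}})\right|\le\sum_{e\ni v}|\gamma(e)|,
\]
where the right-hand sum runs over all edges incident to $v$. Multiplying by $\frac{1}{\deg v}$, summing over $v$, and exchanging the order of summation — each edge $e=(v,w)$ being incident to exactly its two endpoints — transforms the estimate into
\[
\sum_{v\in V}\frac{1}{\deg v}\sum_{e\ni v}|\gamma(e)|=\sum_{e=(v,w)\in E}|\gamma(e)|\left(\frac{1}{\deg v}+\frac{1}{\deg w}\right).
\]
Bounding each factor $\frac{1}{\deg v}+\frac{1}{\deg w}$ by $Q$, which holds by the very definition of $Q$, yields that the numerator is at most $Q\sum_{e\in E}|\gamma(e)|$, i.e. $R(\gamma)\le Q$ for every $\gamma$.

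Combining the two inequalities gives the claim. I do not anticipate a genuine obstacle: the argument is a one-edge test function for sharpness together with a triangle-inequality-and-interchange-of-summation computation for the bound. The only points demanding care are the bookkeeping of the input/output convention, so that the single-edge test function contributes with the correct signs at both of its endpoints, and checking that the summation interchange produces exactly the symmetric factor $\frac{1}{\deg v}+\frac{1}{\deg w}$ per edge, which is precisely the quantity maximized in the definition of $Q$.
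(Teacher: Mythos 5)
Your proposal is correct and follows essentially the same route as the paper: a single-edge test function on a $Q$-maximizing edge for the inequality $\max_\gamma R(\gamma)\ge Q$, and the triangle inequality at each vertex followed by interchanging the order of summation and bounding $\frac{1}{\deg v}+\frac{1}{\deg w}\le Q$ edgewise for the reverse inequality. The only cosmetic difference is that the paper phrases the upper bound via a normalized maximizer $\hat\gamma$ with $\sum_{e\in E}|\hat\gamma(e)|=1$, whereas you prove $R(\gamma)\le Q$ for arbitrary $\gamma$ using homogeneity, which sidesteps the (minor) issue of asserting that a maximizer exists.
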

	\begin{proof}In order to prove that
		\begin{equation*}
		Q\leq \max_{\gamma:E\rightarrow\mathbb{R},\,\gamma\neq 0}\frac{\sum_{v\in V}\frac{1}{\deg v}\cdot \biggl|\sum_{e_{\text{in}}: v\text{ input}}\gamma(e_{\text{in}})-\sum_{e_{\text{out}}: v\text{ output}}\gamma(e_{\text{out}})\biggr|}{\sum_{e\in E}|\gamma(e)|},
		\end{equation*}fix an edge $(v_1,v_2)$ that maximizes $\frac{1}{\deg v}+\frac{1}{\deg w}$ over all $(v,w)\in E$ and let $\gamma':E\rightarrow\mathbb{R}$ be $1$ on $(v_1,v_2)$ and $0$ otherwise. Then, 
		\begin{align*}
		Q&=\frac{1}{\deg v_1}+\frac{1}{\deg v_2}\\
		&=\frac{\sum_{v\in V}\frac{1}{\deg v}\cdot \biggl|\sum_{e_{\text{in}}: v\text{ input}}\gamma'(e_{\text{in}})-\sum_{e_{\text{out}}: v\text{ output}}\gamma'(e_{\text{out}})\biggr|}{\sum_{e\in E}|\gamma'(e)|}\\
		&\leq \max_{\gamma:E\rightarrow\mathbb{R},\,\gamma\neq 0}\frac{\sum_{v\in V}\frac{1}{\deg v}\cdot \biggl|\sum_{e_{\text{in}}: v\text{ input}}\gamma(e_{\text{in}})-\sum_{e_{\text{out}}: v\text{ output}}\gamma(e_{\text{out}})\biggr|}{\sum_{e\in E}|\gamma(e)|}.
		\end{align*}

		Let's now prove that
		\begin{equation*}
		Q\geq \max_{\gamma:E\rightarrow\mathbb{R},\,\gamma\neq 0}\frac{\sum_{v\in V}\frac{1}{\deg v}\cdot \biggl|\sum_{e_{\text{in}}: v\text{ input}}\gamma(e_{\text{in}})-\sum_{e_{\text{out}}: v\text{ output}}\gamma(e_{\text{out}})\biggr|}{\sum_{e\in E}|\gamma(e)|}.
		\end{equation*}
		Let $\hat{\gamma}:E\rightarrow\mathbb{R}$, $\hat{\gamma}\neq 0$ be a maximizer for
		\begin{equation*}
		\frac{\sum_{v\in V}\frac{1}{\deg v}\cdot \biggl|\sum_{e_{\text{in}}: v\text{ input}}\gamma(e_{\text{in}})-\sum_{e_{\text{out}}: v\text{ output}}\gamma(e_{\text{out}})\biggr|}{\sum_{e\in E}|\gamma(e)|}
		\end{equation*}such that, without loss of generality, $\sum_{e\in E}|\hat{\gamma}(e)|=1$. Then,
		\begin{align*}
		Q&=\max_{v\sim w}\biggl(\frac{1}{\deg v}+\frac{1}{\deg w}\biggr)\\
		&=\Biggl(\max_{v\sim w}\biggl(\frac{1}{\deg v}+\frac{1}{\deg w}\biggr)\Biggr)\cdot\biggl(\sum_{e\in E}|\hat{\gamma}(e)|\biggr)\\
		&\geq \sum_{v\sim w}|\hat{\gamma}(e)|\cdot\biggl(\frac{1}{\deg v}+\frac{1}{\deg w}\biggr)\\
		&=\sum_{v\in V}\frac{1}{\deg v}\cdot \biggl(\sum_{e: v \text{ input or output}}|\hat{\gamma}(e)|\biggr)\\
		&\geq \sum_{v\in V}\frac{1}{\deg v}\cdot \biggl|\sum_{e_{\text{in}}: v\text{ input}}\hat{\gamma}(e_{\text{in}})-\sum_{e_{\text{out}}: v\text{ output}}\hat{\gamma}(e_{\text{out}})\biggr|\\
		&=\max_{\gamma:E\rightarrow\mathbb{R},\,\gamma\neq 0}\frac{\sum_{v\in V}\frac{1}{\deg v}\cdot \biggl|\sum_{e_{\text{in}}: v\text{ input}}\gamma(e_{\text{in}})-\sum_{e_{\text{out}}: v\text{ output}}\gamma(e_{\text{out}})\biggr|}{\sum_{e\in E}|\gamma(e)|}.
		\end{align*}This proves the claim.
	\end{proof}As a corollary of Lemma \ref{maintheo1}, we get another characterization of $Q$.
	\begin{cor}
		\begin{equation*}
		Q=\max_{\hat{\Gamma}\subset\Gamma \text{ bipartite}} \frac{\sum_{v\in V}\frac{\deg_{\hat{\Gamma}}(v)}{\deg_\Gamma (v)}}{|E(\hat{\Gamma})|}.
		\end{equation*}
	\end{cor}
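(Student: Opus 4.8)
The plan is to read the corollary off the characterization of $Q$ in Lemma \ref{maintheo1} by setting up a correspondence between bipartite subgraphs $\hat{\Gamma}$ and suitable edge functions $\gamma$. Concretely, I would prove the two inequalities $\max_{\hat{\Gamma}}(\ldots)\le Q$ and $Q\le \max_{\hat{\Gamma}}(\ldots)$ separately, the first via an explicit $\gamma$ attached to each bipartite subgraph, the second by noting that a single edge is already a bipartite subgraph realizing $Q$.

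For $\max_{\hat{\Gamma}}(\ldots)\le Q$, I would fix a bipartite subgraph $\hat{\Gamma}$ with at least one edge and let $\sigma:V\to\{+1,-1\}$ be a proper $2$--colouring of the vertices incident to the edges of $\hat{\Gamma}$ (colouring the remaining, irrelevant vertices arbitrarily). I would then define $\gamma:E\to\mathbb{R}$ by $\gamma(e):=\sigma(\text{input of }e)$ for $e\in E(\hat{\Gamma})$ and $\gamma(e):=0$ otherwise. The key computation is that every edge of $\hat{\Gamma}$ incident to a vertex $v$ contributes exactly $\sigma(v)$ to the signed sum $\sum_{e_{\text{in}}:\,v\text{ input}}\gamma(e_{\text{in}})-\sum_{e_{\text{out}}:\,v\text{ output}}\gamma(e_{\text{out}})$: if $v$ is the input the contribution is $+\sigma(v)$, and if $v$ is the output of $e=(u,v)$ the contribution is $-\gamma(e)=-\sigma(u)=\sigma(v)$, since $\sigma(u)=-\sigma(v)$ along an edge of a bipartite graph. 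Hence this signed sum equals $\sigma(v)\deg_{\hat{\Gamma}}(v)$, its absolute value equals $\deg_{\hat{\Gamma}}(v)$, and so the numerator of the Rayleigh quotient of $\gamma$ is $\sum_{v}\deg_{\hat{\Gamma}}(v)/\deg_\Gamma(v)$, while the denominator is $\sum_{e}|\gamma(e)|=|E(\hat{\Gamma})|$. Since $Q$ is the maximum of this Rayleigh quotient over all nonzero $\gamma$ by Lemma \ref{maintheo1}, the bound follows.

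For the reverse inequality $Q\le\max_{\hat{\Gamma}}(\ldots)$, I would take the bipartite subgraph consisting of the single edge $(v_1,v_2)$ maximizing $\frac{1}{\deg v}+\frac{1}{\deg w}$ over all edges. For this $\hat{\Gamma}$ one has $\deg_{\hat{\Gamma}}(v_1)=\deg_{\hat{\Gamma}}(v_2)=1$, all other $\hat{\Gamma}$--degrees vanish, and $|E(\hat{\Gamma})|=1$, so the expression evaluates to $\frac{1}{\deg v_1}+\frac{1}{\deg v_2}=Q$.

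The only substantive point is the sign choice in the definition of $\gamma$: bipartiteness is precisely the condition that lets one orient the $\pm 1$ values so that the triangle inequality $\bigl|\sum_{\text{in}}\gamma-\sum_{\text{out}}\gamma\bigr|\le\sum_{\text{incident}}|\gamma|$ becomes an equality simultaneously at every vertex, which is exactly what collapses the absolute-value numerator into the clean degree ratio $\sum_v\deg_{\hat{\Gamma}}(v)/\deg_\Gamma(v)$. Once the colouring is in place, the remaining verifications are routine bookkeeping.
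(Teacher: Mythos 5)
Your proposal is correct and takes essentially the same route as the paper: both directions reduce to Lemma \ref{maintheo1}, using an indicator-type edge function supported on the bipartite subgraph for the inequality $\max_{\hat{\Gamma}}(\cdot)\le Q$ (your $\pm 1$ colouring $\sigma$ is equivalent, via the paper's orientation-independence remark, to the paper's device of fixing the orientation so that every edge of $\Gamma'$ runs from one colour class to the other and setting $\gamma'\equiv 1$ there) and the single maximizing edge for the reverse inequality. If anything, your write-up is more explicit than the paper's, which merely says ``let's fix an orientation'' and leaves the bipartition-compatible choice, which is exactly where bipartiteness enters, implicit.
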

	\begin{proof}
		Let's fix $\Gamma'\subset\Gamma$ that maximizes
		\begin{equation*}
		\frac{\sum_{v\in V}\frac{\deg_{\hat{\Gamma}}(v)}{\deg_\Gamma (v)}}{|E(\hat{\Gamma})|}.
		\end{equation*}over all $\hat{\Gamma}\subset\Gamma$ bipartite. Let's fix an orientation and let $\gamma':E(\Gamma)\rightarrow\mathbb{R}$ be $1$ on each oriented edge in $E(\Gamma')$ and $0$ otherwise. Then,
		\begin{align*}
		Q&=\max_{\gamma:E\rightarrow\mathbb{R},\,\gamma\neq 0}\frac{\sum_{v\in V}\frac{1}{\deg v}\cdot \biggl|\sum_{e_{\text{in}}: v\text{ input}}\gamma(e_{\text{in}})-\sum_{e_{\text{out}}: v\text{ output}}\gamma(e_{\text{out}})\biggr|}{\sum_{e\in E}|\gamma(e)|}\\
		&\geq \frac{\sum_{v\in V}\frac{1}{\deg v}\cdot \biggl|\sum_{e_{\text{in}}: v\text{ input}}\gamma'(e_{\text{in}})-\sum_{e_{\text{out}}: v\text{ output}}\gamma'(e_{\text{out}})\biggr|}{\sum_{e\in E}|\gamma'(e)|}\\
		&=\frac{\sum_{v\in V}\frac{\deg_{\Gamma'}(v)}{\deg_\Gamma (v)}}{|E(\Gamma')|}\\
		&=\max_{\hat{\Gamma}\subset\Gamma \text{ bipartite}} \frac{\sum_{v\in V}\frac{\deg_{\hat{\Gamma}}(v)}{\deg_\Gamma (v)}}{|E(\hat{\Gamma})|}.
		\end{align*}To prove the inverse inequality, let $(v_1,v_2)$ be ad edge that maximizes $ \frac{1}{\deg v}+\frac{1}{\deg w}$ over all $(v,w)\in E$. Then, by taking $\hat{\Gamma}\subset \Gamma$ as the bipartite graph containing only the edge $(v_1,v_2)$, we get that
		\begin{equation*}
		\max_{\hat{\Gamma}\subset\Gamma \text{ bipartite}} \frac{\sum_{v\in V}\frac{\deg_{\hat{\Gamma}}(v)}{\deg_\Gamma (v)}}{|E(\hat{\Gamma})|}\geq \frac{1}{\deg v_1}+\frac{1}{\deg v_2}=\max_{v\sim w} \Biggl(\frac{1}{\deg v}+\frac{1}{\deg w}\Biggr)=Q.
		\end{equation*}
		
	\end{proof}
	\subsection{Lower bound for the largest eigenvalue}
	\begin{lem}\label{maintheo2}For every graph,
		\begin{equation*}
		Q\leq \lambda_n.
		\end{equation*}
	\end{lem}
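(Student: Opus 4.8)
The plan is to exploit the variational (Rayleigh-quotient) characterization of $\lambda_n$ obtained via the edge--Laplacian, namely
\[
\lambda_n = \max_{\gamma:E\to\mathbb{R},\,\gamma\neq 0}\frac{\sum_{v\in V}\frac{1}{\deg v}\left(\sum_{e_{\text{in}}: v\text{ input}}\gamma(e_{\text{in}})-\sum_{e_{\text{out}}: v\text{ output}}\gamma(e_{\text{out}})\right)^2}{\sum_{e\in E}\gamma(e)^2},
\]
and to insert a single carefully chosen test function. Since $\lambda_n$ is a maximum over all nonzero $\gamma$, it suffices to exhibit one $\gamma$ whose Rayleigh quotient equals $Q$.

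First I would pick an edge $(v_1,v_2)$ realizing $Q$, that is, with $\frac{1}{\deg v_1}+\frac{1}{\deg v_2}=\max_{v\sim w}\bigl(\frac{1}{\deg v}+\frac{1}{\deg w}\bigr)=Q$, exactly as in the proof of Lemma \ref{maintheo1}. I would then take $\gamma'$ to be the indicator of that edge: $\gamma'\equiv 1$ on $(v_1,v_2)$ and $\gamma'\equiv 0$ on all other edges. The point is that for this particular $\gamma'$ the inner vertex sum $\sum_{e_{\text{in}}}\gamma'(e_{\text{in}})-\sum_{e_{\text{out}}}\gamma'(e_{\text{out}})$ equals $+1$ at the input of the chosen edge, $-1$ at its output, and $0$ at every other vertex; hence both its absolute value and its square equal $1$ at $v_1$ and $v_2$ and vanish elsewhere. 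Likewise $\sum_{e}\gamma'(e)^2=\sum_{e}|\gamma'(e)|=1$.

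Substituting $\gamma'$ into the numerator therefore yields $\frac{1}{\deg v_1}+\frac{1}{\deg v_2}=Q$ with denominator $1$, so the $L_2$ Rayleigh quotient of $\gamma'$ is exactly $Q$. By the maximality of $\lambda_n$ this gives $\lambda_n\geq Q$, which is the claim.

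There is essentially no obstacle here: the key (and only) observation is that on a $\{0,1\}$-valued test function supported on a single edge, squares and absolute values coincide, so the very indicator that attains $Q$ in the $L_1$-characterization of Lemma \ref{maintheo1} also attains the value $Q$ in the $L_2$ Rayleigh quotient. The lower bound $Q\leq\lambda_n$ is thus immediate from the variational principle, and I would not expect any additional technical work beyond verifying these elementary evaluations.
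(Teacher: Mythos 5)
Your proposal is correct and is essentially identical to the paper's own proof: both fix an edge $(v_1,v_2)$ attaining $Q$, plug the indicator function $\gamma'$ of that edge into the edge--Laplacian Rayleigh quotient for $\lambda_n$, and observe that the quotient evaluates exactly to $\frac{1}{\deg v_1}+\frac{1}{\deg v_2}=Q$. No gaps; nothing further is needed.
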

	\begin{proof}As in the proof of Lemma \ref{maintheo1}, fix an edge $(v_1,v_2)$ that maximizes $\frac{1}{\deg v}+\frac{1}{\deg w}$ over all edges $(v,w)$ and let $\gamma':E\rightarrow\mathbb{R}$ be $1$ on $(v_1,v_2)$ and $0$ otherwise. Then, 
		\begin{align*}
		\lambda_n&=\max_{\gamma:E\rightarrow\mathbb{R},\,\gamma\neq 0}\frac{\sum_{v\in V}\frac{1}{\deg v}\cdot \biggl(\sum_{e_{\text{in}}: v\text{ input}}\gamma(e_{\text{in}})-\sum_{e_{\text{out}}: v\text{ output}}\gamma(e_{\text{out}})\biggr)^2}{\sum_{e\in E}\gamma(e)^2}\\
		&\geq \frac{\sum_{v\in V}\frac{1}{\deg v}\cdot \biggl(\sum_{e_{\text{in}}: v\text{ input}}\gamma'(e_{\text{in}})-\sum_{e_{\text{out}}: v\text{ output}}\gamma'(e_{\text{out}})\biggr)^2}{\sum_{e\in E}\gamma'(e)^2}\\
		&=\frac{1}{\deg v_1}+\frac{1}{\deg v_2}\\
		&=Q.
		\end{align*}
	\end{proof}
	\begin{rmk}
		Observe that $Q\geq \frac{n}{n-1}$ if and only if there exists a vertex of degree $1$. In fact, if there exists such a vertex, then
		\begin{equation*}
		Q\geq 1+\frac{1}{n-1}=\frac{n}{n-1}.
		\end{equation*}If there is no such vertex, then
		\begin{equation*}
		Q\leq \frac{1}{2}+\frac{1}{2}=1\leq \frac{n}{n-1}.
		\end{equation*}Therefore, the bound in Lemma \ref{maintheo2} is better than the usual bound $\frac{n}{n-1}\leq\lambda_n$ only for a small class of graphs. However, the aim of our work is not to find the \emph{best possible bounds for $\lambda_n$} but the \emph{best possible bounds for $\lambda_n$ involving $Q$}, in order to show that $Q$ is a Cheeger--like constant. We shall see, in Section \ref{sectionprecisionlower}, that the bound in Lemma \ref{maintheo2} is actually the best possible lower bound for $\lambda_n$ involving only $Q$.
	\end{rmk}
	\subsection{Upper bound for the largest eigenvalue}
	\begin{lem}\label{maintheo3}For every graph,
		\begin{equation*}
		\lambda_n\leq Q\cdot \tau.
		\end{equation*}
	\end{lem}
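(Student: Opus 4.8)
The plan is to estimate the Rayleigh quotient
\[
\lambda_n=\max_{f\neq 0}\frac{\sum_{v\sim w}(f(v)-f(w))^2}{\sum_{v\in V}\deg v\cdot f(v)^2}
\]
directly, using the eigenfunction $f$ that attains the maximum, and to reduce the combinatorial content to a single edge. The first step is a purely algebraic reduction: for any edge $v\sim w$ with $\deg w\geq\deg v$ one has
\[
\Bigl(\tfrac{1}{\deg v}+\tfrac{1}{\deg w}\Bigr)\cdot\frac{(\deg w-\deg v+n)\deg v}{\deg v+\deg w}=\frac{n-\deg v+\deg w}{\deg w},
\]
and since the first factor is at most $Q$ and the second at most $\tau$, we obtain $\frac{n-\deg v+\deg w}{\deg w}\leq Q\cdot\tau$ for \emph{every} edge. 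Hence it suffices to prove the cleaner bound $\lambda_n\leq\max_{v\sim w,\ \deg w\geq\deg v}\frac{n-\deg v+\deg w}{\deg w}$, which is equivalent to exhibiting a single edge $v\sim w$ with $(\lambda_n-1)\deg w+\deg v\leq n$.

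For the core estimate I would use the eigenvalue equation in the form $\sum_{w\sim v}(f(v)-f(w))=\lambda_n\deg v\cdot f(v)$, obtained by multiplying $Lf=\lambda_n f$ by $\deg v$. Evaluating it at a vertex $v_0$ where $f$ attains its maximum $M=\max_u f(u)>0$ gives $\lambda_n\deg v_0\cdot M=\sum_{w\sim v_0}(M-f(w))$. Because every summand $M-f(u)$ is nonnegative, I would enlarge the neighbour sum to a sum over all vertices, which is exactly where the count $n$ enters:
\[
\lambda_n\deg v_0\cdot M\leq\sum_{u\neq v_0}(M-f(u))=nM-\sum_{u\in V}f(u).
\]
Running the same argument for $-f$ at its maximizer $v_0'$ (value $M'=-\min_u f(u)>0$) and adding the two inequalities cancels $\sum_u f(u)$ and yields the clean eigenfunction bound
\[
\lambda_n\leq\frac{n\,(M+M')}{\deg v_0\cdot M+\deg v_0'\cdot M'}.
\]
One checks that this is an equality for complete graphs and for stars, which is reassuring, since these are cases where $\lambda_n=Q\tau$.

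The main obstacle is the final passage from this eigenfunction-dependent bound to the combinatorial quantity $\frac{n-\deg v+\deg w}{\deg w}$. The difficulty is twofold: the extremal vertices $v_0$ and $v_0'$ need not be adjacent, so the two degrees appearing above are not a priori the endpoints of one edge; and the bound genuinely depends on the ratio $M/M'$, whose worst case returns only the weak estimate $\lambda_n\leq n/\min(\deg v_0,\deg v_0')$. Thus the heart of the proof has to be a more careful choice of a certifying edge together with a control of $M/M'$ by the local degree data along that edge, and I expect this to be precisely where the asymmetric expression defining $\tau$ originates. An alternative route worth attempting is spectral: writing $\lambda_n=1-\mu_{\min}(D^{-1/2}AD^{-1/2})$ and using orthogonality of the eigenfunction to the constants, i.e.\ $\sum_v\deg v\cdot f(v)=0$ (again the source of the factor $n$), one is reduced to a lower bound for the smallest eigenvalue of the normalized adjacency matrix on the hyperplane orthogonal to $D^{1/2}\mathbf 1$, generalizing the Perron/complement argument that settles the regular case through $\mu_{\min}(A)\geq\deg-n$.
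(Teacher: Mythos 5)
Your opening reduction is correct and coincides exactly with the paper's: the identity
\begin{equation*}
\Bigl(\tfrac{1}{\deg v}+\tfrac{1}{\deg w}\Bigr)\cdot\frac{(\deg w-\deg v+n)\deg v}{\deg v+\deg w}=\frac{\deg w-\deg v+n}{\deg w}
\end{equation*}
shows that it suffices to prove $\lambda_n\leq\max_{v\sim w:\,\deg w\geq\deg v}\frac{\deg w-\deg v+n}{\deg w}$. But that intermediate bound is the entire content of the lemma, and you never prove it. Your eigenfunction estimate $\lambda_n\leq\frac{n(M+M')}{\deg v_0\cdot M+\deg v_0'\cdot M'}$ is correct (and equality for $K_n$ and stars does hold), but, as you yourself concede, $v_0$ and $v_0'$ need not be adjacent and $M/M'$ is uncontrolled, so all it yields unconditionally is $\lambda_n\leq n/\min\{\deg v_0,\deg v_0'\}$ with degrees not attached to any edge. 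That fallback is genuinely too weak to imply the lemma: for the star $K_{1,n-1}$ the extremal vertices include leaves of degree $1$, so it gives only $\lambda_n\leq n$, whereas the lemma asserts $\lambda_n\leq Q\cdot\tau=2$. The ``careful choice of a certifying edge together with a control of $M/M'$'' that you flag as the heart of the proof is precisely what is missing, and the spectral alternative sketched at the end is likewise not carried out beyond the regular case.

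For comparison, the paper does not attack the Rayleigh quotient directly at all: it quotes the known estimate \cite[Theorem 5]{upper},
\begin{equation*}
\lambda_n\leq 2-\min_{v\sim w}\frac{\bigl|\mathcal{N}(v)\cap\mathcal{N}(w)\bigr|}{\max\{\deg v,\deg w\}},
\end{equation*}
and combines it with the inclusion--exclusion count $|\mathcal{N}(v)\cap\mathcal{N}(w)|\geq\deg v+\deg w-n$, which holds for every pair of vertices. This delivers the intermediate bound for every edge simultaneously, with no need to locate a certifying edge or control eigenfunction values. The information your argument cannot see --- and which drives the cited theorem --- is about common neighbourhoods of adjacent vertices; a self-contained proof would have to establish that estimate (or something equivalent), not refine the max/min-vertex inequality.
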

	\begin{proof}
		We apply \cite[Theorem 5]{upper} to obtain
		\begin{align*}
		\lambda_n&\leq 2-\min_{v\sim w}\frac{\bigl|\mathcal{N}(v)\cap \mathcal{N}(w)\bigr|}{\max\{\deg v,\deg w\}}\\
		&\leq 2-\min_{v\sim w:\,\deg w\geq \deg v}\frac{\deg v + \deg w - n}{\deg w}\\
		&=\max_{v\sim w:\,\deg w\geq \deg v}\frac{\deg w-\deg v+n}{\deg w}\\
		&=\max_{v\sim w:\,\deg w\geq \deg v}\Biggl(\frac{1}{\deg v}+\frac{1}{\deg w}\Biggr)\cdot\Biggl(\frac{(\deg w-\deg v+n)\cdot \deg v}{\deg v+\deg w}\Biggr)\\
		&\leq Q\cdot \tau.
		\end{align*}
	\end{proof}
	Observe that the bound in Lemma \ref{maintheo3} is not a better upper bound for $\lambda_n$ than the one in \cite[Theorem 5]{upper}. Nevertheless, it is a good upper bound for $\lambda_n$ involving $Q$, as we shall see in Section \ref{sectionprecisionupper}.
	\section{Choice of Q}\label{sectionchoiceQ}
	Let us motivate the choice of $Q$. As we have discussed in Section \ref{sectionprel},
	\begin{align}
	\lambda_n&=\max_{f:V\rightarrow\mathbb{R},\, f\neq 0}\frac{\sum_{v\sim w}\biggl(f(v)-f(w)\biggr)^2}{\sum_{v\in V}\deg v\cdot f(v)^2}\label{eqf}\\
	&=\max_{\gamma:E\rightarrow\mathbb{R},\,\gamma\neq 0}\frac{\sum_{v\in V}\frac{1}{\deg v}\cdot \biggl(\sum_{e_{\text{in}}: v\text{ input}}\gamma(e_{\text{in}})-\sum_{e_{\text{out}}: v\text{ output}}\gamma(e_{\text{out}})\biggr)^2}{\sum_{e\in E}\gamma(e)^2}.\label{eqgamma}
	\end{align}We have chosen $Q$ to be the constant that can be written as (\ref{eqgamma}) by replacing the $L_2$--norm by the $L_1$--norm both in the numerator and denominator. We could have chosen to work on the constant that can be written as (\ref{eqf}) by replacing the $L_2$--norm by the $L_1$--norm, but such a constant is actually equal to $1$ for all graphs, as shown by the following lemma. Furthermore, while the characterization of the Cheeger constant is interesting also because it is equal to the second smallest eigenvalue of the \emph{$1$--Laplacian}, one cannot get an analogous constant in this sense because the largest eigenvalue of the \emph{$1$--Laplacian} equals $1$ for every graph, as shown in \cite[Theorem 5.1]{Chung}. For completeness, we shall provide a proof.
	\begin{lem}
		For every graph,
		\begin{equation*}
		\max_{f:V\rightarrow\mathbb{R},\,f\neq 0}\frac{\sum_{v\sim w}\bigl|f(v)-f(w)\bigr|}{\sum_{v\in V}\deg v\cdot \bigl|f(v)\bigr|}=1.
		\end{equation*}
	\end{lem}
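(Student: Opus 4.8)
The plan is to prove the equality by establishing the two inequalities $\leq 1$ and $\geq 1$ separately. The upper bound should fall out immediately from the triangle inequality applied edge by edge, while the value $1$ will be attained by a very simple test function, so I would argue with an explicit maximizer rather than with a supremum.

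For the upper bound, I would start from the elementary estimate $|f(v)-f(w)|\leq |f(v)|+|f(w)|$, valid for every edge $v\sim w$. Summing this over all edges and regrouping the right-hand side vertex by vertex, each vertex $v$ contributes the term $|f(v)|$ exactly once for every edge incident to it; since $\Gamma$ has no loops and no multiple edges, there are exactly $\deg v$ such edges. Hence
\begin{equation*}
\sum_{v\sim w}\bigl|f(v)-f(w)\bigr|\leq\sum_{v\sim w}\bigl(|f(v)|+|f(w)|\bigr)=\sum_{v\in V}\deg v\cdot |f(v)|,
\end{equation*}
so the quotient is at most $1$ for every nonzero $f$, which gives the inequality $\leq 1$.

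For the reverse inequality I would exhibit a function that realizes the value $1$. Pick any vertex $v_0$ (such a vertex exists and satisfies $\deg v_0\geq 1$, since there are no isolated vertices) and let $f$ be the function that is $1$ on $v_0$ and $0$ on all other vertices. Then the only edges that contribute to the numerator are the $\deg v_0$ edges incident to $v_0$, each contributing $|1-0|=1$, so the numerator equals $\deg v_0$; the denominator is $\deg v_0\cdot 1=\deg v_0$ as well. The quotient is therefore exactly $1$, and combining this with the upper bound proves the asserted equality.

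I do not anticipate any genuine obstacle, as both directions are short. The one point that deserves a moment of care is the bookkeeping in the upper bound, namely verifying that regrouping the edge sum $\sum_{v\sim w}\bigl(|f(v)|+|f(w)|\bigr)$ according to vertices yields precisely the weights $\deg v$; this is exactly where the standing hypotheses of no loops, no multiple edges, and no isolated vertices are used.
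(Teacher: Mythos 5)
Your proof is correct and follows essentially the same route as the paper's: the triangle inequality $|f(v)-f(w)|\leq|f(v)|+|f(w)|$ summed over edges and regrouped by vertices for the upper bound, and the indicator function of a single vertex for the lower bound. The only difference is cosmetic and in your favor: you bound the quotient for every nonzero $f$ directly, whereas the paper first invokes a normalized maximizer $\hat{f}$, an existence assumption your argument does not need.
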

	\begin{proof}
		Let $\hat{f}:V\rightarrow\mathbb{R}$ be a maximizer of
		\begin{equation*}
		\frac{\sum_{v\sim w}\bigl|f(v)-f(w)\bigr|}{\sum_{v\in V}\deg v\cdot \bigl|f(v)\bigr|}
		\end{equation*}and assume, without loss of generality, that $\sum_{v\in V}\deg v\cdot \bigl|\hat{f}(v)\bigr|=1$. Then,
		\begin{align*}
		\max_{f:V\rightarrow\mathbb{R},\,f\neq 0}\frac{\sum_{v\sim w}\bigl|f(v)-f(w)\bigr|}{\sum_{v\in V}\deg v\cdot \bigl|f(v)\bigr|}&=\sum_{v\sim w}\bigl|\hat{f}(v)-\hat{f}(w)\bigr|\\
		&\leq \sum_{v\sim w}\bigl|\hat{f}(v)\bigr|+\bigl|\hat{f}(w)\bigr|\\
		&=\sum_{v\in V}\deg v\cdot \bigl|\hat{f}(v)\bigr|\\
		&=1.
		\end{align*}To see the inverse inequality, let $\tilde{f}:V\rightarrow\mathbb{R}$ that is $1$ on a fixed vertex and $0$ on all other vertices. Then,
		\begin{equation*}
		\max_{f:V\rightarrow\mathbb{R},\,f\neq 0}\frac{\sum_{v\sim w}\bigl|f(v)-f(w)\bigr|}{\sum_{v\in V}\deg v\cdot \bigl|f(v)\bigr|}\geq \frac{\sum_{v\sim w}\bigl|\tilde{f}(v)-\tilde{f}(w)\bigr|}{\sum_{v\in V}\deg v\cdot \bigl|\tilde{f}(v)\bigr|}=1.
		\end{equation*}
	\end{proof}
	\section{How good is the lower bound?}\label{sectionprecisionlower}
	To see that $Q\leq \lambda_n$ is a sharp lower bound, consider the case of $K_2$: here, $Q=\lambda_2=2$. Also, for $n>2$, consider a non--bipartite graph such that there exists an edge $(v,w)$ with $\deg v=1$ and $\deg w=2$. Then, clearly
	\begin{equation*}
	Q=1+\frac{1}{2}=\frac{3}{2}
	\end{equation*}and, since the graph is non--bipartite, $\lambda_n<2$. Therefore, if we look for a bound of the form
	\begin{equation*}
	Q\cdot \nu \leq \lambda_n,
	\end{equation*}we must have
	\begin{equation*}
	\nu \leq \frac{\lambda_n}{Q}<\frac{4}{3} \simeq 1.33.
	\end{equation*}Hence $Q\leq \lambda_n$ is actually a good lower bound involving only $Q$ for each $n$.
	\section{How good is the upper bound?}\label{sectionprecisionupper}
	In order to see that the bound $Q\cdot\tau$ is actually a good upper bound for $\lambda_n$, let us first construct an  example for which the bound  $\lambda_n\leq Q\cdot \tau$ is sharp.
	\begin{ex}
		For $d$--regular graphs, it's easy to see that $Q=\frac{2}{d}$ and $\tau=\frac{n}{2}$, therefore $\lambda_n\leq Q\cdot \tau$ is equivalent to
		\begin{equation*}
		\lambda_n\leq \frac{n}{d}.
		\end{equation*}In the particular case of the complete graph $K_n$, $d=n-1$ and $\lambda_n=\frac{n}{n-1}$ \cite{Chung} therefore $\lambda_n=Q\cdot\tau$, i.e. the inequality in Lemma \ref{maintheo3} becomes an equality.
	\end{ex}
	For further motivating our upper bound, we shall:
	\begin{enumerate}
		\item Prove that, for each graph on $n$ nodes,
		\begin{equation*}
		\tau < 0.54\cdot n
		\end{equation*}and $0.54$ is the best $\varepsilon$ with a precision of two decimal places such that
		\begin{equation*}
		\lambda_n\leq Q\cdot \varepsilon \cdot n.
		\end{equation*}
		\item Prove that there is no  bound of the form
		\begin{equation*}
		\lambda_n\leq Q\cdot \Biggl(\frac{n}{2}+c\Biggr),
		\end{equation*}if $c$ is a constant that does not depend on $n$, as we might be tempted to do by looking at the example of regular graphs.
	\end{enumerate}In order to prove these two points, we shall first discuss \emph{one--sided bipartite graphs}, a new big class of graphs that includes among others petal graphs, complete graphs and complete bipartite graphs.
	\subsection{One--sided bipartite graphs}\label{sectionkd}
	\begin{definition}
		Fix $n$ and $k$ such that $0<k\leq n-2$. Let $\Gamma=(V,E)$ be a graph on $n$ vertices such that $V=V_1\sqcup V_2$, $|V_2|=k$ therefore $|V_1|=n-k$, $v_1\sim v_2$ for each $v_1\in V_1$ and $v_2\in V_2$, $\deg v_2=n-k$ for each $v_2\in V_2$ and $\deg v_1=d$ for each $v_1\in V_1$, for some $d\geq k$. Call such a graph a \emph{$(k,d)$--one--sided bipartite graph}.
	\end{definition}
	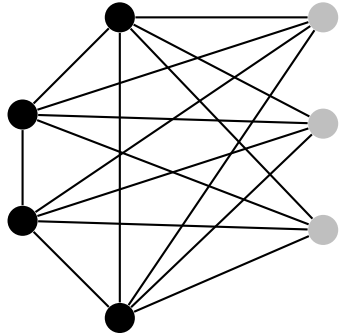
\begin{figure}[ht]
		\centering
		\begin{tikzpicture}
		\node[rednode] (1) {};
		\node[rednode] (2) [below left = of 1]  {};
		\node[rednode] (3) [below = of 2] {};
		\node[] (8) [right = of 1] {};
		\node[rednode] (4) [below right = of 3] {};
		\node[bluenode] (5) [right = of 8]  {};
		\node[bluenode] (6) [below = of 5] {};
		\node[bluenode] (7) [below = of 6] {};
		
		\path[draw,thick]
		(1) edge node {} (2)
		(1) edge node {} (4)
		(1) edge node {} (5)
		(1) edge node {} (6)
		(1) edge node {} (7)
		(2) edge node {} (3)
		(2) edge node {} (5)
		(2) edge node {} (6)
		(2) edge node {} (7)
		(3) edge node {} (4)
		(3) edge node {} (5)
		(3) edge node {} (6)
		(3) edge node {} (7)
		(4) edge node {} (5)
		(4) edge node {} (6)
		(7) edge node {} (4);
		
		\end{tikzpicture}
		\caption{A $(k,d)$--one--sided bipartite graph on $7$ nodes, with $k=3$ and $d=5$. The black nodes are the ones of degree $d$.}
		\label{fig:kd}
	\end{figure}
	\begin{rmk}
		In a $(k,d)$--one--sided bipartite graph, the vertex set is divided into two sets $V_1$ and $V_2$. All possible edges between $V_1$ and $V_2$ are there, the $k$ vertices in $V_2$ are not connected to each other and the vertices in $V_1$ all have degree $d$, therefore there are edges between vertices of $V_1$ if and only if $d>k$ (Figure \ref{fig:kd}). In particular, a $(k,d)$--one--sided bipartite graph is:
		\begin{itemize}
			\item The petal graph if $k=1$ and $d=2$;
			\item The complete graph $K_n$ if $k=1$ and $d=n-1$;
			\item The graph $K_n\setminus\{e\}$ if $k=2$ and $d=n-1$;
			\item The complete bipartite graph $K_{d,n-k}$ if $d=k$;
			\item Not bipartite if $d>k$;
			\item A $d$--regular graph if $d=n-k$.
		\end{itemize}
	\end{rmk}
	\begin{lem}\label{lemexistencekd}
		Given $n$, $k$ and $d$ such that $n\geq 3$, $0<k\leq n-2$ and $k\leq d\leq n-1$, there exists a $(k,d)$--one--sided bipartite graph on $n$ nodes if and only if at least one of $d-k$ and $n-k$ is even.
	\end{lem}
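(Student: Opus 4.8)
The plan is to reduce the existence question to the classical problem of constructing a regular graph of prescribed degree on a prescribed number of vertices. By definition, in a $(k,d)$--one--sided bipartite graph the complete bipartite graph between $V_1$ and $V_2$ is entirely forced: every $v_1\in V_1$ is joined to every $v_2\in V_2$, and since there are no edges among the vertices of $V_2$, this already accounts for all $n-k$ neighbours of each $v_2$ (giving $\deg v_2=n-k$) and for exactly $k$ of the neighbours of each $v_1$. Hence the only remaining freedom is the choice of the edges internal to $V_1$, and the requirement $\deg v_1=d$ means precisely that each $v_1$ must receive $d-k$ further neighbours, all inside $V_1$. Therefore constructing a $(k,d)$--one--sided bipartite graph on $n$ nodes is \emph{equivalent} to constructing a $(d-k)$--regular graph on the $n-k$ vertices of $V_1$.

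First I would record that the two degree constraints required for such a regular graph hold automatically under the hypotheses: we need $0\le d-k$, which is the assumption $d\ge k$, and we need $d-k\le (n-k)-1$ (the target degree cannot exceed the number of available vertices minus one), which follows from $d\le n-1$. With these in hand, the existence of the $(d-k)$--regular graph is governed solely by a parity condition, and the lemma follows from the classical criterion for the existence of regular graphs.

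For the \emph{only if} direction I would apply the handshake lemma to the subgraph induced on $V_1$: the sum of its degrees equals $(d-k)(n-k)$, which must be even, and this forces at least one of $d-k$ and $n-k$ to be even. For the \emph{if} direction I would exhibit the regular graph explicitly via a circulant construction: labelling the vertices of $V_1$ by $\mathbb{Z}/(n-k)\mathbb{Z}$ and, for an even target degree $2t=d-k$, joining each $i$ to $i\pm 1,\dots,i\pm t$; when $d-k$ is odd, which by hypothesis can only occur when $n-k$ is even, additionally joining each $i$ to $i+\tfrac{n-k}{2}$. One then checks that the degree bound $d-k\le(n-k)-1$ guarantees all these neighbours are distinct, so the resulting simple graph is indeed $(d-k)$--regular.

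The main obstacle is precisely this sufficiency step: one must produce an actual $(d-k)$--regular graph, and the only delicate case is an odd target degree, which is exactly where the parity of $n-k$ enters (an odd regular degree on an odd number of vertices is impossible). Once the reduction is made, this is a well-known fact, so the real content of the lemma is the observation that the one--sided bipartite structure decouples cleanly into a forced complete bipartite part together with a free regular graph on $V_1$.
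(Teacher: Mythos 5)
Your proof is correct and follows essentially the same route as the paper: the paper's proof is exactly the observation that the forced complete bipartite part reduces the question to the existence of a $(d-k)$--regular graph on $n-k$ nodes, after which it cites the classical criterion (at least one of the degree and the number of vertices must be even). The only difference is that you additionally prove that classical criterion yourself, via the handshake lemma and an explicit circulant construction, making the argument self-contained where the paper relies on a reference.
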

	\begin{proof}
		This follows easily by definition of one--sided bipartite graphs and by \cite[Theorem 2.6]{regular}, which states that a $d$-regular graph on $n$ nodes exists if and only if at least one of $d$ and $n$ is even.
	\end{proof}
	In Theorem \ref{teokd} we shall prove that for a one--sided bipartite graph with $d\geq n-k$,
	\begin{equation*}
	\lambda_n=\frac{d+k}{d}
	\end{equation*}and for a $(k,d)$--one--sided bipartite graph with $d< n-k$,
	\begin{equation*}
	\frac{d+k}{d}\leq\lambda_n\leq\frac{n}{d}.
	\end{equation*} Let's prove a preliminary lemma first.
	\begin{definition}[\cite{JJspectrum}]  
		Given a vertex $v_1$, let $\mathcal{N}(v_1)\subset V$ be the set of  neighbors of $v_1$. We say that $v_1$ and $v_2$ are \emph{duplicate vertices} if $\mathcal{N}(v_1)=\mathcal{N}(v_2)$.
	\end{definition}Observe that, in particular, duplicate vertices have the same degree and they cannot be neighbors of each other.
	\begin{lem}\label{lemduplicate}
		If $v_1$ and $v_2$ are duplicate vertices and $f$ is an eigenfunction for an eigenvalue $\lambda\neq 1$ of $L$,
		\begin{equation*}
		f(v_1)=f(v_2).
		\end{equation*}
	\end{lem}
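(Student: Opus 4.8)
The plan is to write out the eigenvalue equation for $L=\id-D^{-1}A$ explicitly at each of the two duplicate vertices and then subtract. Recall that $Lf=\lambda f$ means, for every vertex $v$,
\begin{equation*}
f(v)-\frac{1}{\deg v}\sum_{w\sim v}f(w)=\lambda f(v),
\end{equation*}
which I would rearrange into the convenient averaging form
\begin{equation*}
(1-\lambda)\,f(v)=\frac{1}{\deg v}\sum_{w\in\mathcal{N}(v)}f(w).
\end{equation*}
This single identity is the whole engine of the proof.

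Next I would use the hypothesis that $v_1$ and $v_2$ are duplicate vertices, i.e.\ $\mathcal{N}(v_1)=\mathcal{N}(v_2)$. As noted just before the statement, this forces $\deg v_1=\deg v_2=:d$ (both equal $|\mathcal{N}(v_1)|$). Writing the averaging identity at $v_1$ and at $v_2$ gives
\begin{equation*}
(1-\lambda)\,f(v_1)=\frac{1}{d}\sum_{w\in\mathcal{N}(v_1)}f(w),
\qquad
(1-\lambda)\,f(v_2)=\frac{1}{d}\sum_{w\in\mathcal{N}(v_2)}f(w).
\end{equation*}
Since the two index sets $\mathcal{N}(v_1)$ and $\mathcal{N}(v_2)$ coincide, the right-hand sides are literally the same number, so the left-hand sides agree:
\begin{equation*}
(1-\lambda)\,f(v_1)=(1-\lambda)\,f(v_2).
\end{equation*}

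Finally I would invoke the hypothesis $\lambda\neq 1$, so that $1-\lambda\neq 0$ and I may divide to conclude $f(v_1)=f(v_2)$. There is essentially no obstacle here: the only points requiring a word of care are that duplicate vertices automatically share the same degree (so the normalizing factors $1/\deg v_i$ match) and that the common neighborhood makes the two neighbor-sums identical term by term. The role of the assumption $\lambda\neq 1$ is exactly to make the cancellation legitimate, which also explains why the conclusion can genuinely fail for $\lambda=1$.
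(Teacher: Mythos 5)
Your proof is correct and follows essentially the same route as the paper's: both write the eigenvalue equation $(1-\lambda)f(v_i)=\frac{1}{\deg v_i}\sum_{w\sim v_i}f(w)$ at the two duplicate vertices, observe that equal neighborhoods (hence equal degrees) make the right-hand sides coincide, and divide by $1-\lambda\neq 0$. No gap to report.
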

	\begin{proof}An eigenvalue $\lambda$  of $L$ with eigenfunction $f$ satisfies for each vertex $v$,
		\begin{equation*}
		\lambda\cdot f(v)=L f(v)=f(v)-\frac{1}{\deg v}\cdot\sum_{v'\sim v}f(v').
		\end{equation*}In particular,
		
		\begin{align*}
		\lambda\cdot f(v_i)&=f(v_i)-\frac{1}{\deg v_j}\cdot\sum_{v'\sim v_i}f(v') \text{ for }i,j=1,2.
		\end{align*}
		Therefore,
		\begin{equation*}
		\frac{1}{\deg v_2}\cdot\sum_{v'\sim v_2}f(v')=f(v_1)\cdot (1-\lambda)=f(v_2)\cdot (1-\lambda).
		\end{equation*}Since by assumption $\lambda\neq 1$, this implies that $f(v_1)=f(v_2)$.
	\end{proof}
	
	\begin{theorem}\label{teokd}
		For a $(k,d)$--one--sided bipartite graph with $d\geq n-k$,
		\begin{equation*}
		\lambda_n=\frac{d+k}{d}.
		\end{equation*}For a $(k,d)$--one--sided bipartite graph with $d< n-k$,
		\begin{equation*}
		\frac{d+k}{d}\leq\lambda_n\leq\frac{n}{d}.
		\end{equation*}
	\end{theorem}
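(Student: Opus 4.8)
The plan is to determine the spectrum of $L$ almost explicitly, exploiting the rigid structure of a $(k,d)$--one--sided bipartite graph, and then to read off $\lambda_n$ in the two regimes $d\geq n-k$ and $d<n-k$.

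First I would note that the $k$ vertices of $V_2$ are pairwise duplicate, since each of them has neighborhood exactly $V_1$. By Lemma \ref{lemduplicate}, every eigenfunction $f$ for an eigenvalue $\lambda\neq 1$ is therefore constant on $V_2$, say $f\equiv b$ there. Set $g:=f|_{V_1}$ and let $A_1$ be the adjacency matrix of the subgraph induced on $V_1$; since every $v_1\in V_1$ has exactly $k$ neighbors in $V_2$ and total degree $d$, this subgraph is $(d-k)$--regular. The eigenvalue equation $Lf=\lambda f$ then splits into the equation at the vertices of $V_2$, which forces $\sum_{v_1\in V_1}g(v_1)=(n-k)(1-\lambda)b$, and the equation at the vertices of $V_1$, namely $A_1 g=d(1-\lambda)g-kb\mathbf{1}$.

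Next I would split $g=a\mathbf{1}+g_0$ with $g_0\perp\mathbf{1}$. On the subspace of functions that are constant on each side the operator $L$ acts as the $2\times 2$ matrix $\bigl(\begin{smallmatrix} k/d & -k/d \\ -1 & 1\end{smallmatrix}\bigr)$ in the coordinates $(a,b)$, which has determinant $0$ and trace $(d+k)/d$; hence it contributes the eigenvalues $0$ (the constant function) and $(d+k)/d$. I would check directly that $f\equiv -k$ on $V_1$ and $f\equiv d$ on $V_2$ is an eigenfunction for $(d+k)/d$, so that $(d+k)/d$ is always an eigenvalue and in particular $\lambda_n\geq (d+k)/d$. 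The part $g_0$ satisfies $A_1 g_0=d(1-\lambda)g_0$, so $g_0$ is an eigenvector of $A_1$ orthogonal to $\mathbf{1}$ and contributes the eigenvalue $1-\mu/d$ for each eigenvalue $\mu$ of $A_1$ on $\mathbf{1}^\perp$. Since the functions constant on each side (dimension $2$), those supported on $V_1$ with zero sum (dimension $n-k-1$), and those supported on $V_2$ with zero sum (dimension $k-1$, all belonging to $\lambda=1$) give $L$--invariant subspaces whose dimensions add up to $n$, this is the complete spectrum. As $1<(d+k)/d$, I obtain $\lambda_n=\max\bigl((d+k)/d,\,1-\mu_{\min}/d\bigr)$, where $\mu_{\min}$ is the smallest eigenvalue of $A_1$ on $\mathbf{1}^\perp$.

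The main obstacle is a good lower bound on $\mu_{\min}$: the trivial estimate $\mu_{\min}\geq -(d-k)$ is too weak. The key idea is to pass to the complement $\overline{G}_1$ of the graph induced on $V_1$. It is $(n-1-d)$--regular --- a nonnegative regularity precisely because $d\leq n-1$ --- and the largest adjacency eigenvalue of any regular graph equals its regularity. Since on $\mathbf{1}^\perp$ the eigenvalues of $\overline{G}_1$ are exactly $-1-\mu$, the bound $-1-\mu\leq n-1-d$ gives $\mu\geq d-n$ for every eigenvalue $\mu$ of $A_1$ on $\mathbf{1}^\perp$. Consequently $1-\mu/d\leq 1-(d-n)/d=n/d$. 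It then remains to conclude in the two regimes. If $d\geq n-k$, then $\mu\geq d-n\geq -k$, so $1-\mu/d\leq 1+k/d=(d+k)/d$; together with $\lambda_n\geq (d+k)/d$ this yields $\lambda_n=(d+k)/d$. If instead $d<n-k$, the lower bound $\lambda_n\geq (d+k)/d$ is already in hand, while every eigenvalue is at most $n/d$: indeed $1-\mu/d\leq n/d$ by the complement bound, and $(d+k)/d\leq n/d$ is equivalent to $d+k\leq n$, which holds since $d<n-k$. Hence $(d+k)/d\leq \lambda_n\leq n/d$.
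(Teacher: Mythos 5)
Your proof is correct, and it takes a genuinely different route from the paper's. You diagonalize $L$ completely: the three $L$--invariant subspaces you name (functions constant on each side, zero--sum functions supported on $V_1$, zero--sum functions supported on $V_2$) have dimensions $2+(n-k-1)+(k-1)=n$, the $2\times 2$ block does have eigenvalues $0$ and $(d+k)/d$, and your reduction of the remaining spectrum to $1-\mu/d$ for adjacency eigenvalues $\mu$ of the $(d-k)$--regular graph induced on $V_1$ is exact (the equation at $V_2$ forces the zero--sum condition, the equation at $V_1$ gives $A_1g_0=d(1-\lambda)g_0$). The complement trick --- on $\mathbf{1}^\perp$ the complement's eigenvalues are $-1-\mu$, bounded by its regularity $n-1-d$, hence $\mu\geq d-n$ --- then settles both regimes at once. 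The paper never computes the full spectrum: it uses the duplicate--vertex lemma together with orthogonality to constants to show that any eigenvalue $\lambda\neq 0,1$ whose eigenfunction is nonzero on $V_2$ must equal $(d+k)/d$; for $d\geq n-k$ it invokes the external common--neighbour bound $\lambda_n\leq 2-\min_{v\sim w}\bigl|\mathcal{N}(v)\cap \mathcal{N}(w)\bigr|/\max\{\deg v,\deg w\}$ of \cite{upper}, and for $d<n-k$ it estimates the Rayleigh quotient of an eigenfunction vanishing on $V_2$ via the fact that the non--normalized Laplacian's largest eigenvalue is at most the number of vertices. Your route buys self--containedness (the only outside input is that a regular graph's adjacency spectral radius equals its degree) and strictly more information, namely the entire spectrum of these graphs; it also closes a small logical gap, since the paper infers ``this proves that $(d+k)/d$ is an eigenvalue'' from an argument that only pins down the value of such an eigenvalue assuming one exists, whereas your explicit eigenfunction ($-k$ on $V_1$, $d$ on $V_2$) establishes existence outright. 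What the paper's route buys in exchange is brevity, given the results it cites.
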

	\begin{proof}For any fixed $(k,d)$--one--sided bipartite graph, let $\lambda\neq 0,1$ be an eigenvalue for $L$ with eigenfunction $f$. By construction, in a $(k,d)$--one--sided bipartite graph all $k$ vertices in $V_2$ of degree $n-k$ are duplicate vertices. Therefore, by Lemma \ref{lemduplicate}, $f(v_2)$ is constant for each $v_2\in V_2$. If, in particular, $f(v_2)\neq 0$ for each $v_2\in V_2$, we can define
		\begin{equation*}
		\alpha_{v_2}:=\frac{-\sum_{v_1\in V_1}f(v_1)}{f(v_2)}
		\end{equation*}and, since this is constant for each $v_2\in V_2$, we can write $\alpha_{n-k}= \alpha_{v_2}$. Therefore,
		\begin{equation*}
		\lambda\cdot f(v_2)=f(v_2)-\frac{1}{n-k}\cdot\sum_{v_1\in V_1}f(v_1)=f(v_2)\cdot\biggl(1+\frac{\alpha_{n-k}}{n-k}\biggr),
		\end{equation*}which implies that
		\begin{equation*}
		\lambda=1+\frac{\alpha_{n-k}}{n-k}.
		\end{equation*}In particular, since we are assuming $\lambda\neq 1$, this implies that $\alpha_{n-k}\neq 0$, hence we can write
		\begin{equation*}
		f(v_2)=\frac{-\sum_{v_1\in V_1}f(v_1)}{\alpha_{n-k}}.
		\end{equation*}
		Now, by the orthogonality to the constants, we must have $\sum_v \deg v\cdot f(v)=0$. Hence
		\begin{align*}
		0=&\sum_{v_1\in V_1}d\cdot f(v_1)+k\cdot (n-k)\cdot \biggl(\frac{-\sum_{v_1\in V_1}f(v_1)}{\alpha_{n-k}}\biggr)\\&=\Biggl(\sum_{v_1\in V_1}f(v_1)\Biggr)\cdot\Biggl(d-\frac{k\cdot(n-k)}{\alpha_{n-k}}\Biggr).
		\end{align*}If
		\begin{equation*}
		\sum_{v_1\in V_1}f(v_1)=0,
		\end{equation*}then $\alpha_{n-k}=0$ therefore $\lambda=1$, which is a contradiction. Therefore we must have
		\begin{equation*}
		d-\frac{k\cdot(n-k)}{\alpha_{n-k}}=0,
		\end{equation*}which implies that 
		\begin{equation*}
		\alpha_{n-k}=\frac{k\cdot(n-k)}{d}
		\end{equation*}therefore
		\begin{equation*}
		\lambda=1+\frac{k}{d}=\frac{d+k}{d}.
		\end{equation*}This proves that $\frac{d+k}{d}$ is an eigenvalue, therefore 
		\begin{equation*}
		\lambda_n\geq \frac{d+k}{d}.
		\end{equation*}Now, in the particular case of $d\geq n-k$, we can prove also the inverse inequality by applying \cite[Theorem 5]{upper}, which states that
		\begin{equation*}
		\lambda_n\leq 2-\min_{v\sim w}\frac{\bigl|\mathcal{N}(v)\cap \mathcal{N}(w)\bigr|}{\max\{\deg v,\deg w\}}.
		\end{equation*}Let's prove that, for a $(k,d)$--one--sided bipartite graph with $d\geq n-k$,
		\begin{equation*}
		\min_{v\sim w}\frac{\bigl|\mathcal{N}(v)\cap \mathcal{N}(w)\bigr|}{\max\{\deg v,\deg w\}}=\frac{d-k}{d}.
		\end{equation*}Let's consider the possible cases.
		\begin{itemize}
			\item Case 1: $v\in V_1$ and $w\in V_2$. Since we are assuming $d\geq n-k$, we have that $\max\{\deg v,\deg w\}=d$. Therefore,
			\begin{equation*}
			\frac{\bigl|\mathcal{N}(v)\cap \mathcal{N}(w)\bigr|}{\max\{\deg v,\deg w\}}=\frac{d-k}{d}.
			\end{equation*}
			\item Case 2: $v,w\in V_1$. In this case, $\deg v=\deg w=d$. Also, $v$ and $w$ have $k$ neighbors in common in $V_2$ and at least $2(d-k)-(n-k)$ neighbors in common in $V_1$. Therefore,
			\begin{equation*}
			\frac{\bigl|\mathcal{N}(v)\cap \mathcal{N}(w)\bigr|}{\max\{\deg v,\deg w\}}\geq \frac{k+2 (d-k)-(n-k)}{d}=\frac{2d-n}{d}\geq \frac{d-k}{d},
			\end{equation*}where the last inequality follows from the assumption that $d\geq n-k$.
		\end{itemize}Therefore,
		\begin{equation*}
		\min_{v\sim w}\frac{\bigl|\mathcal{N}(v)\cap \mathcal{N}(w)\bigr|}{\max\{\deg v,\deg w\}}=\frac{d-k}{d}
		\end{equation*}and by \cite[Theorem 5]{upper} this implies that
		\begin{equation*}
		\lambda_n\leq 2-\frac{d-k}{d}=\frac{d+k}{d},
		\end{equation*}therefore that the equality holds in this case.\newline 
		
		It remains to prove that, for $d<n-k$,
		\begin{equation}\label{b20}
		\lambda_n\leq \frac{n}{d}.
		\end{equation}Let again $\lambda\neq 0,1$ be an eigenvalue for $L$ with eigenfunction $f$. We know that $f(v_2)$ must be constant for each $v_2\in V_2$. In particular, if $f(v_2)\neq 0$, as shown in the first part of the proof we have that
		\begin{equation*}
		\lambda=\frac{d+k}{d}.
		\end{equation*}Therefore, since we are assuming $d<n-k$, we have that
		\begin{equation*}
		\lambda<\frac{n}{d}.
		\end{equation*}
		Let's now consider the case $f(v_2)=0$. We have that
		\begin{align*}
		\lambda&=\frac{\sum_{v\sim w}\biggl(f(v)-f(w)\biggr)^2}{\sum_{v_1\in V}d \cdot f(v_1)^2}\\
		&=\frac{\sum_{v_1\in V_1}k\cdot f(v_1)^2+\sum_{v\sim w;v,w \in V_1}\biggl(f(v)-f(w)\biggr)^2}{\sum_{v_1\in V}d \cdot f(v_1)^2}\\
		&=\frac{k}{d}+\frac{\sum_{v\sim w;v,w \in V_1}\biggl(f(v)-f(w)\biggr)^2}{\sum_{v_1\in V}d \cdot f(v_1)^2}\\
		&\leq \frac{k}{d}+\frac{d-k}{d}\lambda'_n,
		\end{align*}where $\lambda'_n$ is the largest eigenvalue of a $(d-k)$--regular graph on $n-k$ nodes, therefore
		\begin{equation*} 
		\lambda'_n\leq\frac{n-k}{d-k}.
		\end{equation*}In fact, in order to prove \eqref{b20}, it suffices to show that, for $\hat{d}$--regular graphs on $\hat{n}$ nodes, the largest eigenvalue of the non--normalized Laplace operator is at most $\hat{n}$. This is actually true for every graph, because for the non--normalized Laplacian the complete graph has largest eigenvalue equal to $\hat{n}$ and, \emph{if an edge is added into a graph, then none of its Laplacian eigenvalues can decrease} \cite{nonnormalized}. Therefore,
		\begin{equation*}
		\lambda\leq \frac{k}{d}+\frac{d-k}{d}\lambda'_n \leq \frac{k}{d}+ \frac{d-k}{d}\cdot \frac{n-k}{d-k}=\frac{n}{d}.
		\end{equation*}This proves that any eigenvalue of $L$, in the case when $d<n-k$, is at most $n/d$. Therefore, in particular, $\lambda_n\leq n/d$.
	\end{proof}
	
	\begin{rmk}
		Observe also that, for $(k,d)$--one--sided bipartite graphs with $d\geq n-k$,
		\begin{equation*}
		Q=\frac{1}{d}+\frac{1}{n-k}.
		\end{equation*}For $(k,d)$--one--sided bipartite graphs with $d< n-k$,
		\begin{equation*}
		Q=\frac{2}{d}.
		\end{equation*}
	\end{rmk}
	\subsection{Conclusions}
	As a consequence of Theorem \ref{teokd}, we can prove the following corollary that further motivates the upper bound in Lemma \ref{maintheo3}.
	\begin{cor}
		\begin{enumerate}
			\item For each graph on $n$ nodes,
			\begin{equation*}
			\tau < 0.54\cdot n
			\end{equation*}and $0.54$ is the best $\varepsilon$ with a precision of two decimal places such that
			\begin{equation*}
			\lambda_n\leq Q\cdot \varepsilon \cdot n.
			\end{equation*}
			\item We can not have a bound of the form
			\begin{equation*}
			\lambda_n\leq Q\cdot \Biggl(\frac{n}{2}+c\Biggr),
			\end{equation*}if $c$ is a constant that does not depend on $n$.
		\end{enumerate}
	\end{cor}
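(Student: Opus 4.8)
The plan is to derive both parts from two ingredients: an explicit maximization of $\tau$ over all admissible degree pairs, and the family of one-sided bipartite graphs of Theorem \ref{teokd}, which saturates the worst case asymptotically. Writing $a=\deg v$ and $b=\deg w$ with $1\leq a\leq b\leq n-1$, I would first study
$$g(a,b)=\frac{(b-a+n)a}{a+b}=a\left(1+\frac{n-2a}{a+b}\right)$$
as a function of real variables. For fixed $a$ it is monotone in $b$: decreasing when $a<n/2$, so its supremum over $b\geq a$ is $g(a,a)=n/2$; and increasing when $a>n/2$, so its supremum is $g(a,n-1)=:h(a)$. Maximizing $h$ over $a\in(n/2,n-1]$ is a one-variable calculus problem with maximizer $a^\ast=\sqrt{(n-1)(3n-2)}-(n-1)$, and a short computation gives the closed form $\sup_{a,b}g=h(a^\ast)=4n-3-2\sqrt{(n-1)(3n-2)}$. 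Since $\tau$ is a maximum of $g$ over a subset of \emph{integer} pairs, it is bounded above by this supremum, so the inequality $\tau<0.54\,n$ reduces, after isolating the square root and squaring (both sides being positive for $n\geq 2$), to $0.0284\,n^2+0.76\,n-1>0$, which holds for every integer $n\geq 2$. Combined with Lemma \ref{maintheo3}, namely $\lambda_n\leq Q\cdot\tau$, this gives $\lambda_n\leq Q\cdot 0.54\,n$, so $\varepsilon=0.54$ is admissible.

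To see that $\varepsilon=0.53$ is \emph{not} admissible — hence that $0.54$ is optimal to two decimal places — I would exhibit a saturating family. Taking the $(k,d)$-one-sided bipartite graphs with $d=n-1$ (which always exist, since $n-k$ and $d-k=n-1-k$ are consecutive integers, so Lemma \ref{lemexistencekd} applies), Theorem \ref{teokd} gives $\lambda_n=(d+k)/d$ exactly, as $d\geq n-k$ whenever $k\geq 1$, while the remark following Theorem \ref{teokd} gives $Q=\tfrac1d+\tfrac1{n-k}$. Hence
$$\frac{\lambda_n}{Q\,n}=\frac{(n-1+k)(n-k)}{n(2n-1-k)}.$$
Choosing $k=\lfloor(2-\sqrt3)\,n\rfloor$ and letting $n\to\infty$, this ratio converges to $4-2\sqrt3\approx 0.5359>0.53$, so for $n$ large enough there is an actual graph with $\lambda_n>0.53\,Q\,n$, ruling out $\varepsilon=0.53$.

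For part (2) the same family suffices. Since $\lambda_n/(Q\,n)\to 4-2\sqrt3>\tfrac12$, we have
$$\frac{\lambda_n}{Q}-\frac{n}{2}=\left(\frac{\lambda_n}{Q\,n}-\frac12\right)n\longrightarrow+\infty,$$
because the bracket tends to $\tfrac72-2\sqrt3>0$. Consequently, for any fixed constant $c$ the inequality $\lambda_n/Q\leq n/2+c$ fails for all sufficiently large $n$, so no bound $\lambda_n\leq Q\,(\tfrac n2+c)$ with $c$ independent of $n$ can hold for every graph.

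The main obstacle is the maximization of $\tau$: one must argue cleanly that the continuous optimum $4n-3-2\sqrt{(n-1)(3n-2)}$ really is an upper bound for the integer maximum defining $\tau$, and then verify that the constant $4-2\sqrt3$ lies strictly between $0.53$ and $0.54$, since the entire two-decimal-place assertion rests on this numerical gap. The fact that the one-sided bipartite ratio converges to \emph{exactly} $4-2\sqrt3$ — the same constant governing $\tau$ — is what simultaneously makes $0.54$ tight and forces the conclusion of part (2).
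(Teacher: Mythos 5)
Your proof is correct, and it takes a genuinely different route from the paper. Where you maximize $g(a,b)=\frac{(b-a+n)a}{a+b}$ analytically over the continuous region $1\leq a\leq b\leq n-1$ — obtaining the closed form $\sup g = 4n-3-2\sqrt{(n-1)(3n-2)}$ and reducing $\tau<0.54\,n$ to the elementary quadratic inequality $0.0284\,n^2+0.76\,n-1>0$ — the paper instead delegates this step to a WolframAlpha integer-feasibility query (checking that $\frac{(y(z-y+x))}{y+z}\geq 0.54\,x$ has no integer solutions in the admissible range). Likewise, to rule out $\varepsilon=0.53$, the paper uses a second WolframAlpha query to produce one explicit witness ($n=249$, $k=69$, $d=241$), whereas you use the asymptotic family $d=n-1$, $k=\lfloor(2-\sqrt3)n\rfloor$ with ratio $\lambda_n/(Qn)\to 4-2\sqrt3\approx 0.5359$. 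For part (2) the paper takes the family $d=n-1$, $k=n/4$, computing $\lambda_n/Q=\frac{15n^2-12n}{28n-16}\sim\frac{15}{28}n$, while you reuse your part-(1) family and the bound $\frac{7}{2}-2\sqrt3>0$; both arguments are valid (your verifications of existence via Lemma \ref{lemexistencekd}, of $d\geq n-k$, and of the formulas from Theorem \ref{teokd} and its remark all check out). What your approach buys is substantial: it is computer-free and self-contained, it identifies the sharp asymptotic constant $4-2\sqrt3$ that simultaneously governs the upper bound on $\tau$ and the achievable ratio — thereby explaining \emph{why} $0.54$ is the right two-decimal answer rather than merely certifying it — and it unifies all three claims around one family. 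What the paper's approach buys is a concrete, finitely checkable counterexample (an actual graph on $249$ vertices) instead of a "for $n$ sufficiently large" statement, at the cost of relying on unverifiable computer-algebra output.
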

	\begin{proof}
		\begin{enumerate}
			\item By writing in WolframAlpha \cite{wolframalpha}:
			\begin{verbatim}
			(y(z-y+x))/(y+z) >= 0.54 x 
			with x>0, y>0, y<x, z>=y, z<x, integer solutions
			\end{verbatim} 
			one can see that there is no solution. Therefore,
			\begin{equation*}
			\tau < 0.54\cdot n
			\end{equation*}for each graph and, by Lemma \ref{maintheo3}, $\lambda_n\leq Q\cdot 0.54\cdot n$. In order to see that $0.54$ is the best $\varepsilon$ with a precision of two decimal places such that
			\begin{equation*}
			\lambda_n\leq Q\cdot \varepsilon \cdot n,
			\end{equation*}observe that for $(k,d)$--one--sided bipartite graphs with $d\geq n-k$, we have that
			\begin{equation*}
			\frac{\lambda_n}{Q}=\frac{dn-dk+kn-k^2}{d+n-k}.
			\end{equation*}By writing in WolframAlpha \cite{wolframalpha}:
			\begin{verbatim}
			(xz-yz+xy-y^2)/(x-y+z) > (0.53*x),
			with x>0, y>0, y<x-1, z>=x-y, z>=y, z<x integer solutions
			\end{verbatim} 
			one can see that there are solutions, for example for $x=n=249$, $y=k=69$ and $z=d=241$. Since $n-k$ is even, by Lemma \ref{lemexistencekd} there exists a $(k,d)$--one--sided bipartite graph with these values of $n$, $k$ and $d$. For such a graph,
			\begin{equation*}
			\lambda_n>Q\cdot 0.53\cdot n.
			\end{equation*}This proves the first claim.
			\item For $(k,d)$--one--sided bipartite graphs with $d=n-1$ and $k=\frac{n}{4}$,
			\begin{equation*}
			\frac{\lambda_n}{Q}=\frac{15n^2-12n}{28n-16}.
			\end{equation*}Therefore, if we look for an upper bound of $\lambda_n$ of the form $Q\cdot g(n)$, we must have $g(n)\geq \frac{15n^2-12n}{28n-16}$ for each $n$. In particular, we can not take any $g(n)=\frac{n}{2}+c$ if $c$ is a constant that does not depend on $n$.
		\end{enumerate}
	\end{proof}

	\subsection*{Data Availability Statement}Data sharing is not applicable to this article as no new data were created or analyzed in this study.
	
	\subsection*{Acknowledgements}Raffaella Mulas wants to thank Florentin Münch and Emil Saucan for the helpful comments and discussions.
	
	\bibliographystyle{alpha}
	\bibliography{CheegerLike20.01.21}	
\end{document}